\documentclass[11pt,a4paper]{amsart}

\usepackage{amsmath, amsthm, amssymb, graphicx, color, euscript, appendix,enumitem,microtype,hyperref,mathtools,parskip}

\newtheorem{theorem}{Theorem}
\newtheorem{consequence}{Consequence}[section]
\newtheorem{proposition}[consequence]{Proposition}
\newtheorem{lemma}[consequence]{Lemma}

\newtheorem*{thm}{Theorem}

\newtheorem*{acknowledgment}{Acknowledgment}

\theoremstyle{definition}
\newtheorem{definition}[consequence]{Definition}
\newtheorem{fact}{Fact}

\theoremstyle{remark}
\newtheorem{remark}[consequence]{Remark}

\newcommand{\R}{\mathbb{R}} 
\newcommand{\C}{\mathbb{C}} 
 
\newcommand{\N}{\mathbb{N}}
\newcommand{\D}{\mathbb{D}}

\newcommand{\cC}{{\ensuremath{\mathcal{C}}}}

\newcommand{\cO}{{\ensuremath{\mathcal{O}}}}
\newcommand{\cS}{{\ensuremath{\mathcal{S}}}}

\newcommand{\Aeul}{\EuScript{A}}
\newcommand{\Beul}{\EuScript{B}}
\newcommand{\Ceul}{\EuScript{C}}

\newcommand{\Ieul}{\EuScript{I}}
\newcommand{\Jeul}{\EuScript{J}}

\newcommand{\Seul}{\EuScript{S}}
\newcommand{\Ueul}{\EuScript{U}}
\newcommand{\Veul}{\EuScript{V}}
\newcommand{\Weul}{\EuScript{W}}

\DeclareMathOperator{\ord}{ord}
\DeclareMathOperator{\Homeo}{Homeo}
\DeclareMathOperator{\Mat}{Mat}
\DeclareMathOperator{\GL}{GL}

\setlength{\parskip}{0pt}

\begin{document}
\title{A homotopy theorem for Oka theory}
\author{Luca Studer}
\email{luca.studer@math.unibe.ch}
\begin{abstract} We prove a homotopy theorem for sheaves. Its application shortens and simplifies the 
proof of many Oka principles such as Gromov's Oka principle for elliptic submersions.
\end{abstract} 

\maketitle

\section{introduction}
\subsection{Motivation} Oka theory is the art of reducing proofs in complex geometry to purely topological statements. 
Its applications reach beyond complex geometry; for example to the study of minimal surfaces~\cite{AF}.
The power of the theory lies in the fact that there are many problems -- some of them almost a century old -- 
for which Oka theory provides the only known approaches. Examples include the following theorems.

\begin{thm}[Grauert~\cite{Grauert}]
Two complex analytic vector bundles over a Stein base which are isomorphic 
as complex topological vector bundles are complex analytically isomorphic.
\end{thm}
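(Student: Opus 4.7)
The plan is to recast Grauert's theorem as an Oka-principle statement about sections of a holomorphic fibration. Let $E, F \to X$ be two holomorphic complex vector bundles of rank $n$ over a Stein space $X$, and suppose $\phi_0 \colon E \to F$ is a given continuous complex-linear isomorphism. Form the holomorphic fiber bundle $\pi \colon \mathrm{Iso}(E,F) \to X$ whose fiber over $x$ is the space of $\C$-linear isomorphisms $E_x \to F_x$. A global continuous (respectively holomorphic) section of $\pi$ is precisely a topological (respectively holomorphic) vector bundle isomorphism $E \to F$; so the given datum $\phi_0$ is a continuous section, and the theorem is equivalent to the assertion that every continuous section of $\pi$ is homotopic through continuous sections to a holomorphic one.

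Next I would check that $\pi$ falls into a class of holomorphic maps for which a general Oka principle is available. Over any open $U \subset X$ trivializing both $E$ and $F$, one has $\mathrm{Iso}(E,F)|_U \cong U \times \GL_n(\C)$, so $\pi$ is a holomorphic fiber bundle with fiber the complex Lie group $\GL_n(\C)$. Such bundles are elliptic submersions in Gromov's sense, because $\GL_n(\C)$ admits a dominating spray given by left translation along the matrix exponential. At this point one invokes the Oka principle for elliptic submersions -- or, in the spirit of the present paper, the homotopy theorem for sheaves applied to the pair of sheaves of continuous and holomorphic sections of $\pi$ -- to conclude that $\phi_0$ is homotopic to a holomorphic section, which is the desired holomorphic isomorphism $E \to F$.

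The main obstacle is the local-to-global passage hidden inside the Oka principle. Classically this is a Cartan-style induction along an exhaustion of $X$ by holomorphically convex compacts: at each stage one must extend the current holomorphic approximation across a Cartan pair by solving a non-linear $\bar\partial$-type problem with values in $\GL_n(\C)$, while preserving the ambient continuous homotopy class. The role of the paper's homotopy theorem is presumably to package exactly this induction into a single abstract statement about sheaves, so that the proof of Grauert's theorem reduces to the two concrete checks performed above: identifying the correct sheaf of isomorphisms and recognizing its elliptic local structure.
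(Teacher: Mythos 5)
Your reduction is correct and follows exactly the route this paper indicates: the paper does not itself prove Grauert's theorem but quotes it as a classical consequence of the Oka principle for principal $G$-bundles, which its appendix identifies as a special case of the Oka principle for elliptic submersions --- precisely your bundle $\mathrm{Iso}(E,F)$ with fiber $\GL_n(\C)$, with the hard local-to-global work delegated to that Oka principle (whose analytic inputs the paper locates in Cartan's Propositions 1 and 2). The one point worth tightening is that the dominating spray must be defined globally on the total space, e.g.\ $s(\phi,v)=\phi\circ\exp(v)$ with $v$ ranging in the endomorphism bundle $\mathrm{End}(E)$, not merely fiberwise in a local trivialization as "left translation along the matrix exponential" suggests.
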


\begin{thm}[Forster, Ramspott~\cite{endromisbündel}] 
The ideal sheaf of a smooth complex analytic curve in a Stein manifold $X$ of 
dimension $n\geq 3$ is generated by $n-1$ holomorphic functions $X\to \C$.
\end{thm}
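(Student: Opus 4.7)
The plan is to recast the theorem as the existence of a global holomorphic section of a certain sheaf, and then invoke an Oka principle together with an obstruction-theoretic computation. Let $\Seul$ denote the sheaf on $X$ whose sections over an open set $U$ are $(n-1)$-tuples $(f_1,\dots,f_{n-1})\in \cO(U)^{n-1}$ that generate $\cI_Y(U)$; a global section of $\Seul$ is precisely the kind of holomorphic generating system we are after. Over $U\subset X\setminus Y$ a section is a map $U\to \C^{n-1}\setminus\{0\}$, while near a point of $Y$, after choosing transverse local coordinates $z_1,\dots,z_{n-1}$, a section amounts to an $(n-1)\times(n-1)$ matrix of holomorphic functions whose restriction to $Y$ takes values in $\GL_{n-1}(\C)$.

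First, I would verify that $\Seul$ fits into the framework of the paper's homotopy theorem (or, in the classical setting, that it arises as the sheaf of sections of an elliptic submersion over $X$). This would yield that the map from global holomorphic sections of $\Seul$ to global continuous sections is a weak homotopy equivalence, so that the problem reduces to producing a continuous section.

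Second, I would construct a continuous section by obstruction theory. Since $X$ is Stein of complex dimension $n$, by Andreotti--Frankel it has the homotopy type of a CW complex of real dimension at most $n$. Off $Y$ the typical fibre is $\C^{n-1}\setminus\{0\}\simeq S^{2n-3}$, which for $n\geq 3$ is at least $(n-1)$-connected, while the transition across $Y$ is controlled by the fact that $Y$ has real codimension $2n-2\geq 4$ in $X$. A cell-by-cell extension over the $n$-skeleton of $X$ then encounters no obstructions, and the starting local data exist because $\cI_Y$ is locally generated by $n-1$ functions.

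The main obstacle lies in the first step: the sheaf $\Seul$ degenerates along $Y$, where the ``open'' model $\C^{n-1}\setminus\{0\}$ is replaced by a matrix-valued model glued in via the conormal bundle of $Y$. Checking directly that $\Seul$ is the sheaf of sections of an elliptic submersion is a delicate spray construction reminiscent of Gromov's original treatment, and this is precisely the technicality that the paper's homotopy theorem is designed to absorb, by replacing such sprays with a uniform sheaf-theoretic criterion that one can verify locally on $X$.
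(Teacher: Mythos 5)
The paper itself does not prove this theorem: it is quoted as a motivating application, and the appendix indicates the intended route, namely Theorem~\ref{t1} applied to an admissible pair of sheaves in the sense of Forster and Ramspott, whose hypotheses (weak flexibility of $\cC$-pairs, supplied by Lemmas~2 and~3 of~\cite{Forster und Ramspott}, and the local weak homotopy equivalence, supplied by Lemma~1 together with property (PH)) constitute the analytic content. Your overall plan --- encode generating $(n-1)$-tuples as sections of a sheaf $\Seul$, use an Oka principle to reduce to the existence of a continuous section, and kill the obstructions using that a Stein $X$ has the homotopy type of an $n$-complex while $\C^{n-1}\setminus\{0\}\simeq S^{2n-3}$ is $(2n-4)$-connected, with $2n-4\geq n-1$ exactly when $n\geq 3$ --- is indeed the known route, so the skeleton is right.

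There are, however, two genuine gaps. First and most seriously, you treat the verification that $\Seul$ satisfies the hypotheses of an Oka principle as a technicality that the homotopy theorem is ``designed to absorb.'' It is not: Theorem~\ref{t1} absorbs only the topological bookkeeping (induction over a $\cC$-cover, gluing, passage to the limit), while its hypotheses --- that every sufficiently small $\cC$-pair is weakly flexible for $\Seul$ and that the inclusion of $\Seul$ into the corresponding sheaf of continuous generating systems is a local weak homotopy equivalence --- are precisely the analytic key difficulties, and for this sheaf they are nontrivial exactly because of the degeneration you observe: the fibre collapses from $\C^{n-1}\setminus\{0\}$ off $Y$ to a $\GL_{n-1}(\C)$-valued model along $Y$, so $\Seul$ is not the sheaf of sections of an elliptic submersion and no unstratified spray construction will produce it. One needs either the admissible-pairs machinery (a Runge approximation and a splitting lemma in an associated sheaf of non-abelian groups) or the stratified Oka principle; neither is supplied by the proposal. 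Second, your obstruction-theoretic step is only carried out away from $Y$: near $Y$ a section is a matrix trivializing the conormal bundle, and extending a continuous generating system across $Y$ requires an actual argument (using that $Y$ is a curve, hence of real dimension $2$), whereas ``controlled by the fact that $Y$ has real codimension $2n-2\geq 4$'' is an assertion, not a proof. Both gaps are fillable --- they are the content of Forster and Ramspott's original paper --- but as written this is a plan rather than a proof.
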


\begin{thm}[Gromov~\cite{elliptic bundles}]
Every continuous map from a Stein manifold $X$ to $\C^n\setminus Y$ is homotopic to a holomorphic map given that 
$Y\subset \C^n$ is an algebraic subvariety of codimension at least $2$.
\end{thm}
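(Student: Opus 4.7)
My plan is to deduce the theorem from Gromov's general Oka principle for elliptic manifolds by exhibiting a dominating spray on $\C^n\setminus Y$. Once this is done, that principle applied to the trivial holomorphic submersion $X\times(\C^n\setminus Y)\to X$ immediately converts every continuous section, i.e.\ every continuous map $X\to\C^n\setminus Y$, into a holomorphic section homotopic to it, which is exactly the claim.

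The geometric heart of the argument is the construction of sufficiently many $\C$-complete algebraic vector fields on $\C^n$ that vanish on $Y$ and together span $\C^n$ at every point of $\C^n\setminus Y$. For each nonzero direction $u\in\C^n$ let $\pi_u\colon\C^n\to\C^n/\C u$ denote the linear projection, and consider shear fields $V(z)=\phi(\pi_u(z))\,u$ with $\phi$ a polynomial vanishing on the Zariski closure of $\pi_u(Y)$. Such $V$ automatically vanishes on $Y$, and it is $\C$-complete with time-$t$ flow $z\mapsto z+t\,\phi(\pi_u(z))\,u$ because $\phi\circ\pi_u$ is constant along the $u$-direction. The codimension hypothesis enters precisely here: since $\dim\overline{\pi_u(Y)}\leq\dim Y\leq n-2$, this closure is a proper algebraic subset of $\C^{n-1}$, so for generic $u$ the image $\pi_u(z_0)$ lies outside it and a suitable $\phi$ makes $V(z_0)$ any prescribed nonzero multiple of $u$. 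Varying $u$ over a basis of $\C^n$ and composing the time-$1$ flows yields a dominating spray $s\colon(\C^n\setminus Y)\times\C^N\to\C^n\setminus Y$.

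The remaining ingredient, and the main obstacle, is Gromov's Oka principle itself. The classical proof proceeds by exhausting $X$ with Stein compacts and inductively deforming the map over one convex bump at a time, using the spray to patch together two holomorphic approximations across each bump while keeping uniform control. Managing this infinite induction globally -- with the correct extension data and uniform estimates -- is traditionally the hardest part; the homotopy theorem for sheaves announced in the abstract appears to be designed precisely to axiomatise and streamline this step, presumably by replacing the explicit inductive gluing with an abstract statement about when local-to-global passage succeeds for a sheaf of holomorphic objects on a Stein base.
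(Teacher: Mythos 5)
A preliminary remark on the comparison: the paper does not prove this theorem. It is quoted in the introduction as a motivating example whose only known proofs pass through the Oka principle for elliptic submersions, and the paper's actual contribution (Theorem~\ref{t1}) is the topological half of that Oka principle; the analytic inputs are only referenced in the appendix. Your overall route --- reduce to the Oka principle for the trivial submersion $X\times(\C^n\setminus Y)\to X$ by exhibiting a dominating spray on the fiber, and treat the Oka principle itself as the deferred main ingredient whose local-to-global step Theorem~\ref{t1} axiomatises --- is exactly the route the paper has in mind, and your reading of the role of the homotopy theorem is accurate.

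There is, however, one genuine gap in your spray construction. For a fixed direction $u$, every shear $V(z)=\phi(\pi_u(z))\,u$ with $\phi$ vanishing on $\overline{\pi_u(Y)}$ vanishes on all of $\Sigma_u\coloneqq\pi_u^{-1}(\overline{\pi_u(Y)})$, which is in general strictly larger than $Y$: it contains $Y+\C u$ and can have dimension $\dim Y+1\leq n-1$. At a point $z_0\in\Sigma_{u_1}\setminus Y$ the shears in direction $u_1$ therefore contribute nothing, and a single basis $u_1,\dots,u_n$ of directions will in general only span a hyperplane at such points, so ``varying $u$ over a basis and composing the flows'' does not yet yield a \emph{dominating} spray. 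The standard repair (Gromov~\cite{elliptic bundles}; see also~\cite{Francs book}) is a genericity or Noetherian-induction argument producing finitely many --- typically more than $n$ --- directions $u_1,\dots,u_N$ and polynomials $\phi_j$ such that at every $z\in\C^n\setminus Y$ the fields $V_j$ that do not vanish at $z$ already span $\C^n$; the codimension hypothesis is used a second time here to shrink the successive bad sets. With such a finite family, composing the time-$t_j$ flows gives a dominating spray (equivalently, the family exhibits $\C^n\setminus Y$ as subelliptic, which also suffices for the Oka principle). Everything else in your outline is the intended argument.
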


\begin{thm}[Leiterer~\cite{Leiterer}]
For holomorphic maps $a,b\colon X \to \Mat(n \times n, \C)$ defined on a Stein manifold $X$ 
the equation $f(x)a(x)f(x)^{-1}=b(x)$, $x \in X$ has a holomorphic solution $f\colon X \to \GL_n(\C)$ if 
there is a smooth solution of the same equation.
\end{thm}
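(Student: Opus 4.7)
The plan is to recast Leiterer's theorem sheaf-theoretically and then invoke the paper's homotopy theorem to reduce it to a local analytic question. Define the sheaf $\mathcal{F}$ on $X$ whose sections over an open $U \subset X$ are holomorphic maps $f \colon U \to \GL_n(\C)$ satisfying $f a f^{-1} = b$ on $U$, and let $\mathcal{F}^{\mathrm{c}}$ be the analogous sheaf of continuous solutions. The given smooth solution, regarded as a continuous one, provides a global section of $\mathcal{F}^{\mathrm{c}}$; the theorem asks for a global section of $\mathcal{F}$.

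Applying the paper's homotopy theorem should reduce the problem to two local conditions: first, every $x_0 \in X$ admits arbitrarily small open neighbourhoods $U$ with $\mathcal{F}(U) \neq \emptyset$, i.e.\ a local holomorphic solution exists; and second, on such neighbourhoods the inclusion $\mathcal{F}(U) \hookrightarrow \mathcal{F}^{\mathrm{c}}(U)$ is a weak homotopy equivalence. The second condition follows from the fact that $\mathcal{F}$ is a torsor under the sheaf $\mathcal{Z}$ of holomorphic sections of the centralizer group scheme of $a$, together with Grauert's Oka principle for this group scheme over contractible Stein sets.

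The main obstacle is the first condition. The difficulty is that the holomorphic conjugacy class of $a(x)$, and in particular the dimension of its centralizer, can jump along a proper analytic subvariety of $X$. On the open stratum where the Jordan type is locally constant, a local holomorphic solution is produced from the pointwise value of the smooth solution at $x_0$, because the locus
\[
\{(x,f) \in U \times \GL_n(\C) : f a(x) f^{-1} = b(x)\}
\]
is a holomorphic fibre bundle over $U$. Across a jump stratum one needs a holomorphic normal form for the family $a$ — for instance Arnold's versal deformation of matrices — and then one matches the resulting normal forms of $a$ and $b$ using the pointwise data of the smooth solution; this is the technical heart of the argument.

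Once both local conditions are in place, the paper's homotopy theorem promotes the given continuous section of $\mathcal{F}^{\mathrm{c}}$ to a global section of $\mathcal{F}$, yielding the required holomorphic solution $f$.
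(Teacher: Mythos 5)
The paper does not actually prove Leiterer's theorem: it cites it as a motivating application and, in the appendix, indicates that the proof runs through the Oka principle for admissible pairs of sheaves, with Theorem~\ref{t1} supplying the topological half. Your overall strategy (a sheaf of holomorphic solutions inside a sheaf of non-holomorphic ones, reduced to local analytic input via the homotopy theorem) is the intended one, but your reduction misstates what Theorem~\ref{t1} demands, and the omission hides the main analytic work. The hypotheses of Theorem~\ref{t1} are (a) that $\mathcal{F}\hookrightarrow\mathcal{F}^{\mathrm{c}}$ is a local weak homotopy equivalence, and (b) that $\mathcal{F}$ is complete metric and every $\cC$-pair $(A,B)$ with $B$ sufficiently small is weakly flexible for $\mathcal{F}$ (or that such pairs are ordered flexible). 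Your two ``local conditions'' only cover (a); local nonemptiness is just the $n=0$ case of Definition~\ref{d1} combined with the given global section. Condition (b) --- a Cartan--Grauert type splitting lemma together with a Runge approximation property for the sheaf of invertible elements commuting with $a$, in the non-locally-trivial setting handled by Forster and Ramspott's Lemmas 2 and 3 --- is entirely absent from your proposal, and the remark following Definition~\ref{weakly flexible} warns that this is ``often the main work in the proof of an Oka principle.'' Without it, Theorem~\ref{t1} does not apply.

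Two further points would sink the argument as written. First, you discard the smoothness of the given solution and take $\mathcal{F}^{\mathrm{c}}$ to be the sheaf of \emph{continuous} solutions; that would prove a strictly stronger statement than Leiterer's, and the hypothesis is not decorative: the appendix notes explicitly that for~\cite{Leiterer} one is forced to place the ambient sheaf $\Psi$ in the smooth category, which is precisely what creates the completeness problem motivating assumption~(2) of Theorem~\ref{t1} (ordered flexibility) for $\Psi$ in the parametric statement. Second, your argument for the local weak homotopy equivalence --- $\mathcal{F}$ is a torsor under the centralizer sheaf of $a$, then apply Grauert --- fails exactly on the jump strata you yourself identify: there the centralizer is not a locally trivial group bundle, Grauert's Oka principle for principal bundles does not apply to it, and one is forced back to the admissible-pairs (coherent sheaf) machinery. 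You correctly locate the technical heart of the local problem (holomorphic solvability across strata where the Jordan type of $a$ jumps), but the versal-deformation sketch is a placeholder rather than a proof, and the same stratification difficulty infects your second condition, not only the first.
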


\begin{thm}[Kutzschebauch, L\'arusson, Schwarz~\cite{KLS2}] 
A holomorphic action of a complex reductive Lie group $G$ on $\C^n$ is linearizable if there is a smooth $G$-diffeomorphism from 
$\C^n$ to a $G$-module which induces a biholomorphism on the corresponding categorical quotients, and on every reduced fiber of 
the quotient map.
\end{thm}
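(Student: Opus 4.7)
The plan is to cast linearization as the problem of finding a holomorphic $G$-equivariant biholomorphism $\C^n \to V$, where $V$ denotes the $G$-module target, and to obtain one by upgrading a continuous section of a suitable sheaf on the categorical quotient $Q := \C^n /\!/ G$ to a holomorphic section via the paper's homotopy theorem. Write $\pi_1\colon \C^n \to Q$ and $\pi_2\colon V \to Q$ for the quotient maps; the hypothesis says the smooth $G$-diffeomorphism $\varphi\colon \C^n \to V$ descends to a biholomorphism $\bar\varphi\colon Q \to Q$, which I use to identify the two quotients.

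First I would construct a sheaf $\cF$ on $Q$ whose sections over an open $U \subset Q$ are the holomorphic $G$-equivariant isomorphisms $\pi_1^{-1}(U) \to \pi_2^{-1}(U)$ lying over $\mathrm{id}_U$. The smooth map $\varphi$ gives a continuous global section of the companion sheaf $\cF^{\mathrm{top}}$ of continuous equivariant maps, and a linearization is exactly a holomorphic section of $\cF$ homotopic to $\varphi$ in $\cF^{\mathrm{top}}$. To apply the homotopy theorem I would verify its local hypothesis: on sufficiently small $U \subset Q$, every continuous section of $\cF^{\mathrm{top}}|_U$ deforms through continuous sections to a holomorphic section, with a parameter-dependent version. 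Here Luna's slice theorem is the key local tool: it presents $\pi_1^{-1}(U)$ and $\pi_2^{-1}(U)$ as twisted products $G \times^H W$ and $G \times^H W'$, and the hypothesis that $\varphi$ restricts to a biholomorphism on every reduced fiber $\pi_1^{-1}(q)_{\red}$ forces $W$ and $W'$ to be isomorphic $H$-modules, so the local problem reduces to an Oka principle for equivariant maps into $\GL$ of an $H$-module, a classical ingredient.

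The main obstacle is the interaction between the orbit-type stratification of $Q$ and the gluing inherent in the homotopy theorem: transitioning from a stratum of closed orbits with one isotropy type to an adjacent stratum with a smaller isotropy type requires matching the local slice models compatibly. I would handle this by induction on the depth of the stratification, at each step invoking the homotopy theorem of the present paper to propagate the holomorphic section from a neighborhood of the union of deeper strata to a neighborhood of the next stratum, using that both $\C^n$ and $V$ are $G$-Stein to secure the required equivariant extensions. Assembling these stratum-wise holomorphic sections yields a global section of $\cF$, i.e., the desired linearization.
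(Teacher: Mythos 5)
The paper does not prove this theorem: it is quoted from~\cite{KLS2} as motivation, and the text only explains that its known proof rests on the Oka principle for equivariant isomorphisms of~\cite{KLS}, for which the present paper supplies the topological half (Theorem~\ref{t1}) and, in the appendix, pointers to where the analytic hypotheses are verified. So your proposal must be judged as a sketch of that whole chain, and as such it has real gaps. The most serious one is that you dispose of the local hypothesis of the homotopy theorem in one sentence, reducing it via Luna slices to ``an Oka principle for equivariant maps into $\GL$ of an $H$-module, a classical ingredient.'' The appendix states explicitly that in this setting it is \emph{hard} to show that the inclusion $\Phi\hookrightarrow\Psi$ of holomorphic into smooth equivariant isomorphisms is a local weak homotopy equivalence; this occupies a substantial part of~\cite{KLS} (their Theorem~1.3 and the parametric extensions of their Sections~3 and~5), and the passage from ``$W$ and $W'$ are isomorphic $H$-modules'' to a deformation of an arbitrary smooth equivariant isomorphism into a holomorphic one over a small $U\subset Q$ is precisely where the work lies. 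You also omit entirely the second hypothesis of Theorem~\ref{t1}: the weak (or ordered) flexibility of $\cC$-pairs for $\Phi$, i.e.\ the equivariant Runge approximation and splitting results (Propositions~10.2 and~10.3 of~\cite{KLS}), without which no gluing is possible.

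Two further points. First, your stratum-by-stratum induction, repeatedly invoking the homotopy theorem to propagate the section from deeper strata outward, is not how this machinery is meant to be used and would create new difficulties (extending a section off a neighborhood of a closed stratum is not an instance of Theorem~\ref{t1}, whose conclusion is global over a Stein space). The intended application is a single use of Theorem~\ref{t1} over the Stein quotient $Q$, with the orbit-type stratification absorbed into the verification of the \emph{local} hypotheses near points of each orbit type. Second, the companion sheaf here must be taken in the smooth category, not the continuous one, since the hypothesis is a smooth $G$-diffeomorphism and the fiberwise biholomorphy condition is needed; the paper flags that such smooth sheaves are not naturally complete metric, which is exactly why assumption~(2) (ordered flexibility) of Theorem~\ref{t1} exists. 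Your sketch, which implicitly assumes the complete metric case, would need to be routed through that alternative hypothesis.
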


All known proofs of these results depend on a specific Oka principle. That is, roughly speaking, 
on a theorem which states that 
there are only topological obstructions to a complex analytic solution of an associated problem. 
In concrete terms, Grauert's result is proved using the Oka principle for principal $G$-bundles~\cite{Cartan, Grauert}. All others 
depend on extensions of Grauert's work, namely on 
the Oka principle for admissible pairs of sheaves~\cite{Forster und Ramspott} in the case 
of Forster and Ramspott's and Leiterer's results, on the Oka principle for elliptic 
submersions~\cite{FP, elliptic bundles} in the case of Gromov's result, and on 
the Oka principle for equivariant isomorphisms~\cite{KLS} in the case of the 
result due to Kutzschebauch, L\'arusson and Schwarz. The first three theorems can be proved 
alternatively with Forstneri\v c's Oka principles for stratified fiber bundles~\cite{stratified Oka principle}, a generalization of Gromov's work to stratified settings 
including more general fibers and possibly non-smooth base spaces.
Complete proofs of these powerful tools fill a book. However, a careful study of the literature 
reveals that all proofs of the cited work can be 
divided into a rather analytic first part and a purely topological second part; and that the topological part can 
be formulated very generally, thus providing a reduction of the proofs to the analytic key difficulties. 
This general topological statement is Theorem~\ref{t1} from the present text: 
its assumptions state which key properties one has to show in the first part of the proof of an Oka principle, 
its conclusion is an Oka principle. Theorem~\ref{t1} extends Gromov's \textit{homomorphism theorem} 
from~\cite{partial differential relations} so that it applies in complex 
analytic settings and carries out ideas sketched in~\cite{elliptic bundles}. Its proof builds on ideas of 
Gromov~\cite{partial differential relations, elliptic bundles} and on the work of Forstneri\v c and Prezelj~\cite{FP}, who have carried out 
many steps of the proof of Theorem~\ref{t1} in the special case of elliptic submersions. References 
to the sections in~\cite{Cartan, Forster und Ramspott, FP, elliptic bundles, KLS} providing 
the analytic key ingredients from the assumptions in Theorem~\ref{t1} are given in the appendix. 

\subsection{Results} To formulate Theorem~\ref{t1} let us recall some basic 
notions of complex geometry. Let $X$ be a reduced complex space. A compact $C\subset X$ is called a \textit{Stein compact} 
if it admits a basis of Stein neighborhoods in $X$. If $C\subset B$ for some $B\subset X$, 
then the compact $C$ is called \textit{$\cO(B)$-convex} if $C=\{p \in B \colon  |f(p)| \leq \max_{x \in C} |f(x)|, \ f \in \cO(B)\}$. 
Here, $\cO(B)$ denotes the set of all holomorphic functions defined on unspecified 
neighborhoods of $B$.

\begin{definition}
\label{C-pair}
Let $X$ be a complex space and let $A,B \subset X$. The ordered pair $(A,B)$ is a \textit{$\cC$-pair} if 
\begin{enumerate}
\item $A,B, A \cap B$ and $A \cup B$ are Stein compacts, 
\item $A \cap B$ is $\cO(B)$-convex, and
\item $\overline{A \setminus B} \cap \overline{B \setminus A}= \emptyset$.
\end{enumerate}
\end{definition}

A convenient notion to formulate any question arising naturally in Oka theory 
is the notion of a sheaf of topological spaces. A \textit{sheaf of topological spaces} $\Phi$ on a topological 
space $X$ is a sheaf $U\mapsto \Phi(U)$, $U\subset X$ open, whose sets of local sections are topological spaces, whose 
restrictions are continuous, and which is well-behaved in the sense that for the closed unit ball $\D\subset \R^n$ of any real dimension $n\geq 1$ the presheaf 
$U \mapsto \Phi^\D(U)$ is in fact a sheaf. Here $\Phi^\D(U)$ denotes the set of continuous maps 
$\D\to \Phi(U)$. It is more common to ask for $U\to \Phi^Y(U)$ being a sheaf for any topological space $Y$. 
However, we stick to the given definition since it reflects what we need in this text.
A sheaf of topological spaces is said to be \textit{metric} 
if every set of local sections is a metric space, and a metric sheaf is said to be 
\textit{complete} if every set of local sections is a complete metric space. 
We say that $\Phi \hookrightarrow \Psi$ is an \textit{inclusion of sheaves of 
topological spaces} if $\Phi$ is a subsheaf of the sheaf of topological spaces $\Psi$ whose sets of local sections 
are endowed with the subspace topology of the sets of local sections of $\Psi$. 

We now define those properties for sheaves of topological spaces which the first part of 
a proof of an Oka principle is usually concerned with (Definition~\ref{d1} and \ref{weakly flexible}). 
In the following $s,t$ denote numbers in the unit interval and by using them 
as subscripts we always indicate a homotopy in the common sense. 
We use the convention that a singleton (with empty boundary) is the 
closed unit ball of dimension $n=0$. For a subset $A$ of $X$ we denote by $A^\circ$ its interior. 
A restriction $\Psi(U)\to \Psi(V)$ of a sheaf $\Psi$ will be denoted by $r_V$ if there is no ambiguity regarding the domain. 

\begin{definition}
\label{d1}
An inclusion of sheaves of topological spaces $\Phi \hookrightarrow \Psi$ over a topological 
space $X$ is a \textit{local weak homotopy equivalence} if for every point 
$p \in X$, every open neighborhood $U$ of $p$ and every continuous map 
$f$ from the closed unit ball $\D \subset \R^n$ of any real dimension $n\geq 0$ to 
$\Psi(U)$ with $f(\partial \D)\subset \Phi(U)$, there is a neighborhood 
$p \in V \subset U$ and a homotopy $f_t\colon \D \to \Psi(V)$ such that $f_0=r_V \circ f$, 
$f_t|\partial \D$ is independent of $t$ and $f_1$ has values in $\Phi(V)$.
\end{definition}

\begin{remark}
Local weak homotopy equivalences were introduced in \cite{partial differential relations}. 
Heuristically a local weak homotopy equivalence of 
sheaves $\Phi \hookrightarrow \Psi$ is a weak homotopy equivalence at the level of stalks, 
meaning that $\Phi_p \hookrightarrow \Psi_p$ is a weak homotopy 
equivalence for every $p \in X$. However there is no suitable topology 
on the stalks to make this heuristic a precise statement.
\end{remark}

\begin{definition}
\label{weakly flexible}
Let $\Phi$ be a sheaf of topological spaces on $X$ and let $A,B \subset X$ be compact. 
The ordered pair $(A,B)$ is called \textit{weakly flexible} for $\Phi$ if the following holds. 
Given open neighborhoods $U,V$ resp.~$W$ of $A,B$ 
resp.~$A \cap B$ and a triple of maps $a\colon \D \to \Phi(U)$, $b\colon  \D \to \Phi(V)$ 
and $c_s\colon  \D \to \Phi(W)$ such that $c_0=r_W \circ a$, $c_1=r_W \circ b$ 
and $c_s|\partial \D$ is independent of $s$, there are smaller neighborhoods 
$A\subset U' \subset U$, $B \subset V' \subset V$ resp.~$A \cap B \subset W'\subset W$ 
and homotopies $a_t\colon  \D \to \Phi(U')$, $b_t\colon  \D \to \Phi(V')$ and $c_{s,t}\colon  \D \to \Phi(W')$ 
with $a_0=r_{U'} \circ a$, $b_0=r_{V'} \circ b$ and $c_{s,0}=r_{W'} \circ c_s$ such that 
\begin{enumerate}
\item $c_{0,t}=r_{W'} \circ a_t$ and $c_{1,t}=r_{W'} \circ b_t$, 
\item the restrictions $a_t|\partial \D$, $b_t|\partial \D$ and $c_{s,t}|\partial \D$ are independent of $t$, 
\item $c_{s,1}$ is independent of $s$, and 
\item $r_{A^\circ} \circ a_t$ is in a prescribed neighborhood of $r_{A^\circ} \circ a_0\colon \D \to \Phi(A^\circ)$ with respect to the compact open topology for all $t$.
\end{enumerate}
If $(A,B)$ is a weakly flexible pair for $\Phi$ such that the homotopy $a_t$ from the 
conclusion of the definition can be chosen to satisfy $r_{A^\circ} \circ a_t=r_{A^\circ} \circ a_0$ for all $t$, then 
$(A,B)$ is called an \textit{ordered flexible pair}.
\end{definition}

\begin{remark}
Showing that $\cC$-pairs $(A,B)$ are weakly flexible for a given sheaf $\Phi$ 
is often the main work in the proof of an Oka principle. 
\end{remark}

\begin{theorem}
\label{t1}
Let $X$ be a second countable reduced Stein space and let $\Phi \hookrightarrow \Psi$ be a local weak 
homotopy equivalence of sheaves of topological spaces on $X$. 
Assume that one of the two following statements holds: 
\begin{enumerate}
\item $\Phi$ is complete metric and every point $p \in X$ has a neighborhood $U$ such that every $\cC$-pair $(A,B)$ with $B\subset U$ is weakly flexible for $\Phi$, 
\item every point $p \in X$ has a neighborhood $U$ such that every $\cC$-pair $(A,B)$ with $B\subset U$ is ordered flexible for $\Phi$.
\end{enumerate}
Then $\Phi(X)\not= \emptyset$ if and only if $\Psi(X)\not = \emptyset$. Moreover, if $\Psi$ 
is likewise in either of the two classes of sheaves, then $\Phi(X) \hookrightarrow \Psi(X)$ is a weak homotopy equivalence.
\end{theorem}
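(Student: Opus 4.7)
The plan is to reduce to a relative lifting problem and then execute a classical Oka-style exhaustion. By standard obstruction theory, to show $\Phi(X)\hookrightarrow \Psi(X)$ is a weak homotopy equivalence it is enough to show that for every $n\geq 0$ and every continuous $f\colon \D\to\Psi(X)$ with $f(\partial \D)\subset \Phi(X)$, there is a homotopy $f_t\colon\D\to\Psi(X)$ rel $\partial\D$ from $f_0=f$ to some $f_1$ with $f_1(\D)\subset \Phi(X)$; the case $n=0$ subsumes the non-emptiness statement. Using that $X$ is a second countable Stein space, I would first fix a normal exhaustion $K_1\Subset K_2\Subset\cdots\nearrow X$ by $\cO(X)$-convex Stein compacts and, for every $n$, a finite chain $K_n=A_{n,0}\subset A_{n,1}\subset\cdots\subset A_{n,m_n}=K_{n+1}$ with $A_{n,j}=A_{n,j-1}\cup B_{n,j}$, where each $B_{n,j}$ is a Stein compact small enough to sit inside the neighborhoods furnished by hypothesis~(1) or~(2), and chosen so that $(A_{n,j-1},B_{n,j})$ is a $\cC$-pair. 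Existence of such a filtration on a Stein space is standard.

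The inductive step passes from a section $g\in \Psi(X)$ that lies in $\Phi$ on a neighborhood of $A=A_{n,j-1}$ to one that lies in $\Phi$ on a neighborhood of $A\cup B$, where $B=B_{n,j}$. First I would apply Definition~\ref{d1} finitely many times along a small cover of $B$ to obtain a $\Psi$-homotopy $b_s\colon \D\to \Psi(\tilde B)$, rel $\partial \D$, from $b_0=g|_{\tilde B}$ to some $b_1$ with $b_1(\D)\subset \Phi(\tilde B)$, on a neighborhood $\tilde B$ of $B$. Second, I would feed the triple $(a,b,c_s)=(g|_A,\, b_1,\, b_s|_{A\cap B})$ into the weak-flexibility machinery of Definition~\ref{weakly flexible} for the $\cC$-pair $(A,B)$; this aligns $a$ and $b_1$ to a single section on a neighborhood of $A\cup B$. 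Because $c_s$ takes values in $\Psi$ rather than $\Phi$, the merge must be run at the $\Psi$-level, which is where the ``moreover'' hypothesis that $\Psi$ is itself flexible enters; a final application of Definition~\ref{d1} on the perturbed overlap region then restores the $\Phi$-property. A constant homotopy outside a neighborhood of $B$ extends the local picture to a global $\Psi$-homotopy from $g$ to the new section $g'$.

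Iterating yields a sequence $f_0=f,f_1,f_2,\dots$ in $\Psi(X)$ with $f_n$ in $\Phi$ on a neighborhood of $K_n$, together with connecting $\Psi$-homotopies $h_n$ supported near $K_n\setminus K_{n-1}^\circ$. Under hypothesis~(2), ordered flexibility makes each micro-step strictly constant on $A^\circ$, so $f_n|_{K_m}$ stabilizes once $n$ exceeds $m$, and the limit $f_\infty\in \Phi(X)$ together with a time-reparameterized concatenation of the $h_n$ solves the problem. Under hypothesis~(1), property~(4) of Definition~\ref{weakly flexible} lets me prescribe a shrinking neighborhood at each inductive step, measured in the complete metric of $\Phi$; choosing the radii summable turns $(f_n|_{K_m})_n$ into a Cauchy sequence for every $m$, whose limit lies in $\Phi$ by completeness, and the concatenated homotopy converges uniformly on compacts after a standard geometric reparameterization.

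I expect the main obstacle to be the quantitative bookkeeping in the inductive step. The local weak homotopy equivalence only produces a $\Psi$-homotopy, so $\Phi$-membership is lost transiently during the merge; reestablishing it requires first patching inside $\Psi$ and then reapplying Definition~\ref{d1} on the region where the merge perturbed the section. Propagating the smallness from property~(4) through all these subordinate applications, so that the convergence estimate in case~(1) survives, will require careful tracking in the spirit of the patching scheme of~\cite{FP}, and is the point at which the analytic hypotheses must be fully exploited.
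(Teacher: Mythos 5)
Your overall architecture (exhaust $X$ by $\cC$-pairs, push into $\Phi$ locally, merge with flexibility, take a limit using completeness or stabilization) matches the spirit of the paper, which proves Theorem~\ref{t1} by reducing to Theorem~\ref{t3} via Lemma~\ref{weak} and Proposition~\ref{Cover}. But there is a genuine gap at the first move of your inductive step. You claim to ``apply Definition~\ref{d1} finitely many times along a small cover of $B$'' to produce a single homotopy $b_s\colon\D\to\Psi(\tilde B)$ on a whole neighborhood $\tilde B$ of the compact $B$, ending in $\Phi(\tilde B)$. Definition~\ref{d1} only produces, for each point, a homotopy on some small unspecified neighborhood of that point, and the homotopies obtained at different points need not agree on overlaps; assembling them into one homotopy over a neighborhood of $B$ is itself a gluing problem of exactly the kind you are trying to solve, and you cannot invoke flexibility for it because the intermediate data are not yet in $\Phi$. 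This is precisely why the paper introduces complexes over the nerve of a locally finite cover: Proposition~\ref{comp} deforms the given section into $\Phi$ cell by cell over the nerve, producing not a section near $B$ but a homotopy-coherent family of local sections (the induction on skeleta terminates because the exhaustion is by finite-dimensional compacts), and only afterwards does Proposition~\ref{p2} glue this $\Phi$-valued complex into a genuine section using the flexible cover. Your scheme, which tries to maintain a global element of $\Psi(X)$ lying in $\Phi$ near $A_{n,j}$ at every stage, also breaks at the point where you ``extend by a constant homotopy outside a neighborhood of $B$'': in the weakly flexible case $a_t$ is only approximately constant on $A^\circ$, so the modified local data no longer agree with the old section outside the bump and do not glue back into a global section of $\Psi$.

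A second, related error: you run the merge ``at the $\Psi$-level'' because $c_s$ has values only in $\Psi$, and conclude that the flexibility of $\Psi$ is needed already in the inductive step. That would make the basic existence statement ($\Phi(X)\neq\emptyset$ if and only if $\Psi(X)\neq\emptyset$) depend on the ``moreover'' hypothesis, contradicting the statement of the theorem. In the paper the hypothesis on $\Psi$ enters only to convert the $\Psi$-valued homotopy of complexes into an actual path in $\Psi(X)$ (via the sheaf $\Psi_\beta$ and Lemma~\ref{parametric sheaves}); the production of an element of $\Phi(X)$ uses only the flexibility of $\Phi$, applied after the entire complex has already been pushed into $\Phi$ by Proposition~\ref{comp}.
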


\begin{remark}
\label{analytic}
Since analytic continuation is unique, assumption (2) from Theorem~\ref{t1} is too strong to be satisfied 
by a sheaf $\Phi$ of analytic maps, even in the most basic case $\Phi=\cO_{\C}$ -- the sheaf of holomorphic functions 
in one complex variable. In particular assumption (1) is the property that one tries to show if $\Phi$ is a complex analytic sheaf.
\end{remark}

\begin{remark}
\label{remark ordered flex}
Many sheaves of interest in Oka theory are complete metric. However, there are exceptions. 
Such exceptions -- discussed briefly in the subsection \textit{ordered flexibility} of the appendix -- 
motivated to include assumption (2) as an alternative to assumption (1) in Theorem~\ref{t1}.
\end{remark}

As we will see Theorem~\ref{t1} follows from a more abstract homotopy theorem, namely Theorem~\ref{t3}. 
To state it we need

\begin{definition}
\label{weak flexibility}
A \textit{weakly flexible string} for $\Phi$ of length $n\geq 2$ is recursively defined 
as a finite sequence $(A_1, A_2,A_3, \ldots, A_n)$ of compacts of $X$ such that 
\begin{enumerate}
\item $(A_1 \cup \cdots \cup A_{n-1} , A_n)$ is a weakly flexible pair for $\Phi$, and, if $n\geq 3$, then
\item $(A_1, \ldots, A_{n-1})$ and $(A_1\cap A_n, \ldots , A_{n-1}\cap A_n)$ are weakly flexible strings for~$\Phi$. 
\end{enumerate}
A \textit{weakly flexible cover} for $\Phi$ is a locally finite cover $(A_1, A_2, A_3, \ldots)$ of $X$ 
such that for every $n\in \N$  $(A_1, A_2, \ldots, A_n)$ is a weakly flexible string for $\Phi$. 
We say that $\Phi$ is 
\textit{weakly flexible} if every open cover of $X$ can be refined by a weakly flexible cover for $\Phi$. 
Similarly we define \textit{ordered flexible} sheaves. That is, \textit{ordered flexible strings, covers} 
and \textit{sheaves} are defined by substituting each occurrence of \textit{weakly} by \textit{ordered} 
in the definition of weakly flexible strings, covers and sheaves.
\end{definition}

In the following the \textit{dimension} of a topological space always means the covering dimension.

\begin{theorem}
\label{t3}
Let $X$ be a paracompact Hausdorff space that has an exhaustion by finite dimensional 
compact subsets and let $\Phi \hookrightarrow \Psi$ be a local weak 
homotopy equivalence of sheaves of topological spaces on $X$. 
Assume that $\Phi$ is either 
\begin{enumerate}
\item complete metric and weakly flexible, or 
\item ordered flexible.
\end{enumerate}
Then $\Phi(X)\not= \emptyset$ if and only if $\Psi(X)\not = \emptyset$. Moreover, if $\Psi$ 
is likewise in either of the two classes of sheaves, then $\Phi(X) \hookrightarrow \Psi(X)$ is a weak homotopy equivalence.
\end{theorem}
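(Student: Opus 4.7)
I would package both conclusions as a single \emph{relative homotopy lifting} statement: for each $n\geq 0$ and each $f\colon\D\to\Psi(X)$ with $\D\subset\R^n$ a closed ball and $f(\partial\D)\subset\Phi(X)$, there is a homotopy $f_t\colon\D\to\Psi(X)$ with $f_0=f$, $f_t|\partial\D$ constant in $t$, and $f_1(\D)\subset\Phi(X)$. The case $n=0$ is the non-emptiness conclusion; allowing all $n\geq 0$, with the $\D^{n+1}$-trick handling injectivity on $\pi_n$, yields the weak homotopy equivalence, for which the extra hypothesis on $\Psi$ is needed. For such an $f$, encode the lifting problem as the non-emptiness of an auxiliary pair $\widehat\Phi\hookrightarrow\widehat\Psi$ of sheaves of $\Psi$-homotopies starting at $r\circ f$ rel $\partial\D$, with $\widehat\Phi$ further requiring the endpoint to lie in $\Phi$. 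The constant homotopy shows $\widehat\Psi(X)\neq\emptyset$, and because Definitions~\ref{d1} and~\ref{weakly flexible} admit $\D$ of any dimension, the adjoint between $\D$-parameter homotopies and $\D^{n+1}$-parameter sections transfers the local weak homotopy equivalence and the (ordered) weak flexibility from $\Phi,\Psi$ to $\widehat\Phi,\widehat\Psi$. Thus it suffices to prove the non-emptiness statement.

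\textbf{Inductive construction.} Given $\psi\in\Psi(X)$, I would build $\phi\in\Phi(X)$ along a locally finite weakly flexible cover $(A_k)_{k\geq 1}$ of $X$ refining an open cover on which the local weak homotopy equivalence deforms $\psi$ through $\Psi$ into $\Phi$. Writing $K_k=A_1\cup\cdots\cup A_k$, I would construct by induction on $k$ sections $\phi^{(k)}\in\Phi(W_k)$ on open neighborhoods $W_k\supset K_k$ together with $\Psi$-homotopies $H^{(k)}$ from $\psi|_{W_k}$ to $\phi^{(k)}$, with $\phi^{(k)}$ close to $\phi^{(k-1)}$ on $K_{k-1}^\circ$ in the compact-open sense allowed by condition~(4) of Definition~\ref{weakly flexible} (exact equality in case~(2)).

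\textbf{Inductive step---the main obstacle.} Given $\phi^{(k-1)}$ and $H^{(k-1)}$, the local weak homotopy equivalence near $A_k$ supplies $\tilde\phi_k\in\Phi(A_k)$ together with a $\Psi$-homotopy from $\psi|_{A_k}$ to $\tilde\phi_k$; concatenating this with $H^{(k-1)}$ on $K_{k-1}\cap A_k$ gives a $\Psi$-homotopy between $\phi^{(k-1)}$ and $\tilde\phi_k$ over the overlap. The weakly flexible pair $(K_{k-1},A_k)$, however, demands a $\Phi$-homotopy on the overlap. I would upgrade the $\Psi$-homotopy to a $\Phi$-homotopy by applying the theorem recursively over $K_{k-1}\cap A_k$, which carries its own weakly flexible string $(A_1\cap A_k,\ldots,A_{k-1}\cap A_k)$ by Definition~\ref{weak flexibility}; the parameter dimension of the inner lifting problem is one higher than that of the outer, so the whole argument is a simultaneous recursion on string length and on parameter dimension, bottoming out at length one, where the local weak homotopy equivalence alone suffices. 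With the $\Phi$-homotopy on the overlap in hand, the weakly flexible pair property deforms $\phi^{(k-1)}$ and $\tilde\phi_k$ into agreement on a neighborhood of $K_{k-1}\cap A_k$; they then glue to $\phi^{(k)}$ by the sheaf property of $\Phi$.

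\textbf{Convergence.} In case~(1) I would choose the tolerances in condition~(4) summably in the complete metric on $\Phi$ restricted to each compact in the exhaustion; local finiteness of $(A_k)$ keeps only finitely many steps non-tiny on a fixed compact, so completeness yields a limit $\phi\in\Phi(X)$. In case~(2) the sequence is eventually constant on interiors, so the limit exists automatically. The main obstacle I foresee is the bookkeeping for the nested recursion: each recursive call must produce its inner $\Phi$-homotopy on the overlap with a pre-assigned fine tolerance so that the outer sequence remains Cauchy, forcing a delicate simultaneous choice of the shrinking neighborhoods $U',V',W'$ of Definition~\ref{weakly flexible} across all levels of the recursion.
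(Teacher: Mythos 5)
Your overall architecture is recognizably the paper's: produce local $\Phi$-sections from the local weak homotopy equivalence, glue them along a weakly flexible cover using the pair/string structure, use completeness (resp.\ the interpolation property in the ordered case) to pass to the limit, and reduce the parametric statement to the non-parametric one via the sheaves $\Phi_\alpha$, $\Psi_\alpha$ of maps with prescribed boundary values. The convergence step and the unified reduction are essentially Lemma~\ref{parametric sheaves} and the proof of Proposition~\ref{p2}. However, there is a genuine gap in your inductive step, precisely at the point you flag as ``the main obstacle''. To invoke the weakly flexible pair $(K_{k-1},A_k)$ you need a \emph{single} $\Phi$-valued homotopy $c_s$ on a neighborhood of the whole overlap $K_{k-1}\cap A_k$, and you propose to manufacture it by a recursive call over the string $(A_1\cap A_k,\ldots,A_{k-1}\cap A_k)$. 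But the base of that inner recursion requires $\Phi$-homotopies (with fixed endpoints) over each compact $A_i\cap A_k$, and Definition~\ref{d1} does not supply these: it only yields deformations over an unspecified small neighborhood $V$ of a \emph{point}, with $V$ depending on the map being deformed. Since the $\Psi$-homotopy you need to deform is built from the choices made at stage $k-1$, you cannot have pre-refined the cover to make it fine enough; and refining the cover of the overlap on the fly both destroys the inherited string structure and recreates the same problem one level down on the new overlaps, with nothing to terminate the regress. Symptomatically, your argument never uses the hypothesis that $X$ is exhausted by \emph{finite dimensional} compacts.

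The paper resolves exactly this by decoupling the two phases. Proposition~\ref{comp} first constructs, over the nerve of a cover, \emph{all} of the $\Phi$-valued data at once --- the local sections, the homotopies on pairwise overlaps, and the higher homotopies on all multiple overlaps --- by an induction on the skeleton dimension of the nerve (Lemma~\ref{inductive step}), refining the cover at each stage and applying the local weak homotopy equivalence only cell-by-cell on small neighborhoods $U_x$ of points. The finite dimensionality of the exhaustion bounds the order of the covers ($\ord\Ueul_{A_n}$), so over each compact only finitely many skeleton stages change anything and the process stabilizes. Only then does the gluing (Proposition~\ref{p2}, via Lemma~\ref{main modification}) take place, and it runs \emph{entirely within} $\Phi$: the recursion on string length that you correctly anticipate, with the parameter dimension raised by one via $\Phi_\alpha$ on the sub-string $(A_1\cap A_n,\ldots,A_{n-1}\cap A_n)$, never has to convert $\Psi$-data into $\Phi$-data again, because the initial complex already carries all the needed $\Phi$-valued homotopies. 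To repair your argument you would have to reintroduce this separation, i.e.\ build the full $\Phi$-valued complex over the nerve before any gluing, which is where the real work and the dimension hypothesis enter.
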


\begin{remark}
\label{f/of/wf}
For a sheaf of topological spaces $\Phi$ one can show that flexibility in the sense of Gromov (for a definition see~\cite{partial differential relations}) 
implies ordered flexibility, that ordered flexibility implies weak flexibility, and that both converse implications are wrong.
\end{remark}

\begin{remark}
Gromov's \textit{homomorphism theorem}~\cite{partial differential relations}, p.~77 says that if $\Phi \hookrightarrow \Psi$ is 
a local weak homotopy equivalence and both $\Phi$ and $\Psi$ are flexible, then $\Phi(X) \hookrightarrow \Psi(X)$ is a weak homotopy equivalence. 
Therefore Theorem~\ref{t3} extends Gromov's result and it follows from Remark~\ref{analytic} and~\ref{f/of/wf} that 
this extension is necessary when working in analytic settings.
\end{remark}

\newpage

\begin{acknowledgment}
\normalfont
I would like to thank Frank Kutzschebauch for suggesting the topic and many helpful discussions. 
Moreover I would like to thank Finnur L\'arusson and Gerald Schwarz for numerous valuable comments on a preprint. 
I am also very thankful for stimulating discussions with Jasna Prezelj and Franc Forstneri\v c. 
\end{acknowledgment}

\section{proofs}
In the first two subsections we define relevant notions and establish preliminaries. In the third subsection the so-called initial complex 
is constructed (Proposition~\ref{comp}), and in the fourth subsection we glue families of local sections to a global one (Proposition~\ref{p2}). 
This part follows at some points closely Prezelj~\cite{Prezelj thesis}, where the corresponding work is done in the special case of sheaves of 
sections of elliptic submersions, see also~\cite{Francs book,FP}. 
In the last subsection Theorem~\ref{t1} and~\ref{t3} are proved.

\subsection{Parametric sheaves}
For topological spaces $Y$ and $Z$, $Z^Y$ denotes the space of continuous maps $Y \to Z$ equipped with the compact open topology. 

\begin{lemma}
\label{correspondence}
Let $P, Y, Z$ be topological spaces where $Y$ is locally compact Hausdorff. Then a map $f\colon P \times Y \to Z$ is continuous if 
and only if the map $\tilde f\colon  P \to Z^Y$ defined by $\tilde f(p)\coloneqq f(p, \cdot)$ is continuous.
\end{lemma}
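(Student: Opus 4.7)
This is the classical exponential law of general topology. I would split into the two implications and rely on the subbasis of the compact open topology together with the tube lemma for the forward direction and local compactness for the reverse direction. No new ideas are required, so the only issue is writing the argument cleanly.

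For the forward direction, assume $f$ is continuous and fix $p_0 \in P$. A subbasis for the compact open topology on $Z^Y$ is given by the sets $\langle K,U\rangle = \{g \in Z^Y : g(K)\subset U\}$ with $K\subset Y$ compact and $U\subset Z$ open, so it suffices to prove that $\tilde f^{-1}(\langle K,U\rangle)$ is open. If $\tilde f(p_0) \in \langle K,U\rangle$, then $\{p_0\}\times K \subset f^{-1}(U)$. Since $K$ is compact and $f^{-1}(U)$ is open in $P\times Y$, the tube lemma supplies an open neighborhood $V$ of $p_0$ with $V \times K \subset f^{-1}(U)$, which means $\tilde f(V) \subset \langle K,U\rangle$. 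Hence $\tilde f$ is continuous at $p_0$.

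For the reverse direction, assume $\tilde f$ is continuous and fix $(p_0,y_0)\in P\times Y$ together with an open neighborhood $U$ of $f(p_0,y_0)$. The preimage $\tilde f(p_0)^{-1}(U)\subset Y$ is open and contains $y_0$; because $Y$ is locally compact Hausdorff, there is a compact neighborhood $K$ of $y_0$ with $K \subset \tilde f(p_0)^{-1}(U)$. Then $\tilde f(p_0) \in \langle K,U\rangle$, and continuity of $\tilde f$ produces an open $V \ni p_0$ with $\tilde f(V)\subset \langle K,U\rangle$. The product $V\times K^\circ$ is an open neighborhood of $(p_0,y_0)$ on which $f$ takes values in $U$, so $f$ is continuous at $(p_0,y_0)$.

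The expected obstacle is essentially nil: the forward direction needs only the tube lemma, and the reverse direction needs only the existence of compact neighborhoods inside arbitrary open sets in a locally compact Hausdorff space. Both inputs are standard, so the write up can be kept short.
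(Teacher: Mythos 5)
Your proof is correct and is the standard textbook argument (tube lemma for the forward implication, compact neighborhoods from local compactness for the reverse); the paper itself gives no proof, simply citing the result as well-known. Nothing to add.
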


\begin{proof}
This is well-known and can be found in textbooks.
\end{proof}

Let $\Phi$ be a sheaf of topological spaces on a topological space $X$ and let 
$\D\subset \R^n$ be the closed unit ball of any real dimension $n\geq 0$. Then 
$U\mapsto \Phi^\D(U)$ is a sheaf by definition. We equip the sets of local sections 
with the compact open topology and note that the restrictions are continuous.

\begin{lemma}
\label{ps}
The sheaf $\Phi^\D$ is a sheaf of topological spaces.
\end{lemma}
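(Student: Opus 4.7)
The plan is to verify directly the four constituents of being a sheaf of topological spaces for $\Phi^\D$: that $U\mapsto \Phi^\D(U)$ is a sheaf of sets, that its local sections carry a topology, that restrictions are continuous, and that $U\mapsto (\Phi^\D)^{\D'}(U)$ is again a sheaf of sets for every closed unit ball $\D'\subset\R^m$ of real dimension $m\geq 1$. The case $n=0$ is trivial since $\Phi^\D$ is then naturally isomorphic to $\Phi$, so I assume $n\geq 1$. The sheaf-of-sets condition for $\Phi^\D$ is given by the hypothesis on $\Phi$, and the existence of a topology on each $\Phi^\D(U)$ is built in, so the real content is continuity of restrictions and the sheaf condition for $(\Phi^\D)^{\D'}$.

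Continuity of restrictions is almost immediate: for $V\subset U$, the restriction $\Phi^\D(U)\to\Phi^\D(V)$ is post-composition with the continuous map $r_V\colon \Phi(U)\to\Phi(V)$, and post-composition by a continuous map is continuous for the compact-open topology. (For a clean derivation, one may also apply Lemma~\ref{correspondence} to the evaluation map $\Phi^\D(U)\times\D\to \Phi(U)$ to reduce this to the continuity of $r_V$.)

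The core step is the sheaf condition for $(\Phi^\D)^{\D'}$. Here I would apply Lemma~\ref{correspondence} with $P=\D'$, $Y=\D$ (locally compact Hausdorff), and $Z=\Phi(U)$ to obtain a bijection between $(\Phi^\D)^{\D'}(U)$ and the set of continuous maps $\D'\times\D\to \Phi(U)$, manifestly natural in the open variable $U$. A homeomorphism $\D'\times\D\cong \D''$ onto the closed unit ball $\D''\subset\R^{n+m}$ (both are compact convex bodies with interior) then identifies this, again naturally in $U$, with $\Phi^{\D''}(U)$. Since $n+m\geq 1$, the presheaf $U\mapsto\Phi^{\D''}(U)$ is a sheaf of sets by the hypothesis that $\Phi$ is a sheaf of topological spaces, and naturality in $U$ transports this sheaf property to $(\Phi^\D)^{\D'}$.

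The only points that require any attention are the applicability of Lemma~\ref{correspondence} (which needs $\D$ to be locally compact Hausdorff, which it is) and the elementary identification $\D'\times\D\cong\D''$; neither is a real obstacle. I therefore expect the proof to be a short bookkeeping argument that reduces the claim to structural properties of $\Phi$ already encoded in the definition of a sheaf of topological spaces.
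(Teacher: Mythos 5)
Your proof is correct and follows essentially the same route as the paper: the core step is exactly the paper's application of Lemma~\ref{correspondence} with $P=\D'$, $Y=\D$, $Z=\Phi(U)$ to obtain a natural isomorphism $(\Phi^\D)^{\D'}\cong\Phi^{\D'\times\D}$, combined with the homeomorphism $\D'\times\D\cong\D''$. The extra bookkeeping you include (continuity of restrictions, the $n=0$ case) is handled by the paper in the paragraph preceding the lemma rather than in its proof, so there is no substantive difference.
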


\begin{proof}
We have to show that $U \mapsto (\Phi(U)^\D)^{\D'}$ is a sheaf for the closed unit ball $\D'\subset \R^m$ of any real 
dimension $m\geq 0$. By applying Lemma~\ref{correspondence} to $P=\D'$, $Y=\D$ and $Z=\Phi(U)$ we get a natural correspondence of 
$(\Phi^\D)^{\D'}(U)$ and $\Phi^{\D' \times \D}(U)$. This correspondence 
yields an isomorphism of presheaves 
$$(\Phi^\D)^{\D'} \cong \Phi^{\D' \times \D}.$$ Since $\Phi$ is a sheaf of topological spaces and $\D' \times \D$ is homeomorphic to the closed unit ball 
of dimension $n+m$, 
$\Phi^{\D' \times \D}$ is a sheaf, hence so is $(\Phi^\D)^{\D'} $. 
\end{proof} 

For a continuous map $\alpha\colon  \partial \D \to \Phi(X)$ defined on the boundary of the closed unit ball $\D\subset \R^n$ 
set $\Phi_\alpha(U)=\{f \in \Phi^\D(U) \colon  f|\partial \D= r_U \circ \alpha\}$ 
for an open $U\subset X$. Note that these sets of local sections define a subsheaf $\Phi_\alpha$ of $\Phi^\D$ and 
inherit the structure of a sheaf of topological spaces by taking the subspace topology on $\Phi_\alpha(U)\subset \Phi^\D(U)$ on each set 
of local sections. 

We give an alternative definition of a weakly flexible pair (Lemma~\ref{weakly flexible pair 2}). 
Let $\Phi(U,V)$ denote the path product of $\Phi(U)$ and $\Phi(V)$, that is
$$\{(a,b,c)\in \Phi(U) \times \Phi(V) \times \Phi^{[0,1]}(U\cap V)\colon  r_{U \cap V}(a)=c(0), \ r_{U\cap V}(b)=c(1)\},$$ 
and let us equip $\Phi(U,V)$ with the subspace topology induced 
by the product topology on $\Phi(U) \times \Phi(V) \times \Phi^{[0,1]}(U\cap V)$.
Moreover we identify the image of the inclusion 
$$\Phi(U\cup V) \hookrightarrow \Phi(U,V), \ \ f\mapsto (r_U(f), r_V(f), r_{U\cap V}(f))$$ 
with $\Phi(U \cup V)$. For $U'\subset U$ and $V' \subset V$ the 
map $r_{U',V'}\colon \Phi(U,V) \to \Phi(U',V')$ is defined as 
$r_{U',V'}(f_1,f_2,f_3)=(r_{U'}(f_1),r_{V'}(f_2), r_{U'\cap V'}(f_3))$. Moreover let $p\colon  \Phi(U,V) \to \Phi(U)$ be the projection 
onto the first component. The following Lemma is straightforward. We leave the proof to the reader. 

\begin{lemma}
\label{weakly flexible pair 2}
Let $\Phi$ be a sheaf of topological spaces on $X$ and let $A,B \subset X$ be compact. 
The pair $(A,B)$ is weakly flexible for $\Phi$ if and only if the following holds. Given 
open neighborhoods $U$ and $V$ of $A$ and $B$ and a map $f\colon  \D \to \Phi(U,V)$ 
with $f(\partial \D)\subset \Phi(U \cup V)$, there are smaller neighborhoods $U'$ and $V'$ of $A$ and $B$ and a 
homotopy $g_t\colon I \times \D \to \Phi(U',V')$ such that $g_0=r_{U',V'}\circ f$, $g_1$ 
has values in $\Phi(U'\cup V')$, $g_t|\partial \D$ is independent of 
$t$ and $r_{A^\circ} \circ p \circ g_t$ stays in a prescribed neighborhood of $r_{A^\circ} \circ p \circ g_0$ for all $t \in [0,1]$.
\end{lemma}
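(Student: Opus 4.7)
The plan is a mechanical unpacking of definitions, proceeding in three layers of translation. First, the very definition of the path product $\Phi(U,V)$ decomposes a continuous map $f\colon \D \to \Phi(U,V)$ into a triple $(a, b, \tilde c)$ with $a\colon \D \to \Phi(U)$, $b\colon \D \to \Phi(V)$ and $\tilde c\colon \D \to \Phi^{[0,1]}(U\cap V)$ satisfying the endpoint compatibilities $\tilde c(x)(0) = r_{U\cap V}(a(x))$ and $\tilde c(x)(1) = r_{U \cap V}(b(x))$. Applying Lemma~\ref{correspondence} to $\tilde c$ turns it into a homotopy $c_s\colon \D \to \Phi(U \cap V)$ with $c_0 = r_{U \cap V}\circ a$ and $c_1 = r_{U \cap V} \circ b$, and an analogous unfolding---with an extra parameter handled again via Lemma~\ref{correspondence}---converts a homotopy $g_t\colon \D \to \Phi(U', V')$ into the triple of homotopies $a_t$, $b_t$, $c_{s,t}$ appearing in Definition~\ref{weakly flexible}. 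Both correspondences are bijective by the naturality in Lemma~\ref{correspondence}.

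Second, I would match up the side conditions. The assumption $f(\partial \D) \subset \Phi(U \cup V)$ is the assertion that on the boundary the triple comes from a single section over $U \cup V$, which is equivalent to saying that $a|\partial\D$ and $b|\partial\D$ glue along $U \cap V$ and that $c_s|\partial\D$ is independent of $s$. The requirements $g_0 = r_{U',V'} \circ f$, $g_t|\partial\D$ independent of $t$, and $g_1$ with values in $\Phi(U'\cup V')$ translate similarly into conditions (1), (2) and (3) of Definition~\ref{weakly flexible}; and since $p\colon \Phi(U,V) \to \Phi(U)$ is projection onto the first component, the compact-open condition on $r_{A^\circ}\circ p\circ g_t$ becomes condition~(4) on $r_{A^\circ}\circ a_t$.

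The one point requiring any thought is that the path part of $\Phi(U,V)$ lives on $U\cap V$, whereas Definition~\ref{weakly flexible} allows the homotopy $c_s$ to be defined on an arbitrary neighborhood $W$ of $A\cap B$. For the direction ``Definition~\ref{weakly flexible} implies the lemma'' I would simply apply the definition with $W \coloneqq U \cap V$. For the converse, starting from data $(a, b, c_s)$ with $c_s\colon \D \to \Phi(W)$, I would use compactness of $A$ and $B$ together with $A \cap B \subset W$ to shrink $U$, $V$ to smaller open neighborhoods $U_0 \supset A$, $V_0 \supset B$ satisfying $U_0 \cap V_0 \subset W$; the data then assemble into a map $\D \to \Phi(U_0, V_0)$ to which the lemma applies, and the output is shrunk further (taking $W' \coloneqq U_0' \cap V_0'$) to recover the form demanded by the definition. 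No step is genuinely difficult; the main obstacle is just the bookkeeping of the several neighborhoods $U,V,W,U',V',W'$ and their successive shrinks, which is presumably why the author left the verification to the reader.
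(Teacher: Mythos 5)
The paper gives no proof of this lemma (it is explicitly left to the reader), and your three-layer translation via the path-product decomposition and Lemma~\ref{correspondence} is exactly the intended routine verification. Your dictionary between the side conditions is correct, and your treatment of the converse direction -- shrinking $U,V$ to $U_0,V_0$ with $U_0\cap V_0\subset W$ before assembling the triple into a map $\D\to\Phi(U_0,V_0)$, then taking $W'\coloneqq U_0'\cap V_0'$ -- is right.

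The one concrete oversight is that the same shrinking step is also needed in the forward direction, where you claim one can ``simply apply the definition with $W\coloneqq U\cap V$.'' Definition~\ref{weakly flexible} returns neighborhoods $U',V',W'$ with $A\cap B\subset W'\subset U\cap V$ and a homotopy $c_{s,t}$ defined only on $W'$; implicitly $W'\subset U'\cap V'$ (so that $r_{W'}\circ a_t$ makes sense), but nothing forces $U'\cap V'\subset W'$. Since an element of $\Phi(U'',V'')$ requires its path component to live on all of $U''\cap V''$, you cannot package $(a_t,b_t,c_{s,t})$ into a homotopy in any path product until you have shrunk $U'$ and $V'$ to $U'',V''$ with $U''\cap V''\subset W'$. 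This is the very trick you use two sentences later for the converse, so the repair is immediate; but note that in both directions the shrinking rests on separating the disjoint compacts $A\setminus W'$ and $B\setminus W'$ by disjoint open sets (take $U''=U'\cap(W'\cup P)$, $V''=V'\cap(W'\cup Q)$ with $P\cap Q=\emptyset$), which uses that $X$ is Hausdorff -- harmless in every application in the paper, but worth stating since the lemma itself imposes no separation axiom on $X$. With that step inserted, the gluing of $a_1$ and $b_1$ over $U''\cup V''$ (needed for $g_1$ to land in $\Phi(U''\cup V'')$) also follows correctly from $c_{s,1}$ being independent of $s$ together with the sheaf axiom.
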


\begin{lemma}
\label{parametric sheaves}
Let $\Phi$ be a sheaf of topological spaces on a topological space $X$ and let $\alpha\colon  \partial \D \to \Phi(X)$ be continuous, 
where $\D\subset \R^n$ is the closed unit ball of any real dimension $n\geq0$. 
Then, if any of the following properties holds for $\Theta=\Phi$, the same property holds likewise for $\Theta=\Phi_\alpha$: 
\begin{enumerate}
\item $\Theta$ is a sheaf of topological spaces, 
\item $\Theta$ has the structure of a complete metric sheaf, 
\item A family of compacts $(A_1,A_2, \ldots, A_n)$ is weakly flexible for $\Theta$,
\item A family of compacts $(A_1,A_2, \ldots, A_n)$ is ordered flexible for $\Theta$,
\item $\Theta$ is weakly flexible, and
\item $\Theta$ is ordered flexible.
\end{enumerate}
Moreover, if $\Phi\hookrightarrow \Psi$ is a local weak homotopy equivalence of sheaves, then so is $\Phi_\alpha \hookrightarrow \Psi_\alpha$.
\end{lemma}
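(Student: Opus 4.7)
The unifying idea is Lemma~\ref{correspondence}: a map $f \colon \D' \to \Phi_\alpha(U)$, with $\D'$ any closed ball, corresponds bijectively and naturally to a continuous map $\tilde f \colon \D' \times \D \to \Phi(U)$ satisfying the constraint $\tilde f(p, x) = (r_U \circ \alpha)(x)$ for all $p \in \D'$ and $x \in \partial \D$. Since $\D' \times \D$ is itself homeomorphic to a closed ball, each clause in Definitions~\ref{d1}, \ref{weakly flexible}, and \ref{weak flexibility} applied to $\Phi_\alpha$ with parameter ball $\D'$ translates into the same clause for $\Phi$ with parameter ball $\D' \times \D$, augmented by the $\alpha$-boundary constraint on $\D' \times \partial\D$. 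The crucial point is that every homotopy produced in the conclusion of those definitions is constant on the boundary of its parameter ball; in particular such a homotopy is constant on $\D' \times \partial \D$, so it automatically preserves the $\alpha$-boundary constraint and descends via Lemma~\ref{correspondence} to a valid output in $\Phi_\alpha$. This mechanism drives essentially every part of the lemma.

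For (1), I would apply Lemma~\ref{ps} to the composite ball $\D' \times \D$ and observe that the constraint cutting $\Phi_\alpha$ out of $\Phi^\D$ is pointwise on $\partial \D$ and hence compatible with restriction and gluing. For (2), the space $\Phi^\D(U)$ is complete metric under the sup metric (which induces the compact-open topology because $\D$ is compact) whenever $\Phi(U)$ is, and $\Phi_\alpha(U)$ is the closed subspace cut out by the boundary equation, hence also complete metric.

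For (3) and (4), I would first treat the case of a pair $(A_1, A_2)$: convert the data of weak (respectively ordered) flexibility in $\Phi_\alpha$ into the analogous data in $\Phi$ over the enlarged ball $\D' \times \D$, invoke weak (respectively ordered) flexibility of $(A_1, A_2)$ for $\Phi$, and translate the output back using the principle above. The prescribed neighborhood appearing in clause (4) of Definition~\ref{weakly flexible} is transferred via the homeomorphism of function spaces furnished by Lemma~\ref{correspondence}. Longer strings then follow by induction on $n$ from the recursive Definition~\ref{weak flexibility}. Statements (5) and (6) are immediate corollaries: any weakly (respectively ordered) flexible cover of $X$ for $\Phi$ is, by the string result, such a cover for $\Phi_\alpha$.

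For the final assertion on local weak homotopy equivalences, the same correspondence applies: a map $f \colon \D' \to \Psi_\alpha(U)$ with $f(\partial \D') \subset \Phi_\alpha(U)$ gives $\tilde f \colon \D' \times \D \to \Psi(U)$ whose restriction to $\partial(\D' \times \D) = (\partial \D' \times \D) \cup (\D' \times \partial \D)$ takes values in $\Phi(U)$ (on $\partial \D' \times \D$ by the hypothesis on $f$, on $\D' \times \partial \D$ because $\alpha$ maps into $\Phi(X)$). Applying the local weak homotopy equivalence of $\Phi \hookrightarrow \Psi$ and inverting the correspondence yields the desired homotopy in $\Psi_\alpha$ deforming $f$ into $\Phi_\alpha$. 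The principal difficulty throughout is the bookkeeping of which portion of $\partial(\D' \times \D)$ encodes which boundary condition, but once the correspondence $\D' \to \Phi_\alpha \leftrightarrow \D' \times \D \to \Phi$ is fixed the verifications are essentially symbolic.
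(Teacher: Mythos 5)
Your proposal is correct and follows essentially the same route as the paper: identify $\D'$-families in $\Phi_\alpha$ with $(\D'\times\D)$-families in $\Phi$ via Lemma~\ref{correspondence}, use that $\D'\times\D$ is again a closed ball, and exploit boundary-constancy of the output homotopies to descend back to $\Phi_\alpha$. The only cosmetic difference is that the paper channels the flexible-pair case through the reformulation in Lemma~\ref{weakly flexible pair 2} rather than working directly with the triple $(a,b,c_s)$ of Definition~\ref{weakly flexible}.
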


\begin{proof}
It has been discussed already that $\Phi_\alpha$ is a sheaf of topological spaces if $\Phi$ is. 
If $\Phi(U)$ is a metric space with metric $d$, then $$d_\alpha(f,g)\coloneqq \sup\{d(f(p),g(p))\colon  p \in \D\}$$ defines a metric 
on $\Phi_\alpha(U)$ which agrees with the compact open topology. Moreover if $\Phi(U)$ is complete, then so is $\Phi_\alpha(U)$ 
since limits of continuous maps from $\D$ to complete spaces are continuous. For (3) and (5) it suffices to show that any 
weakly flexible pair $(A,B)$ for $\Phi$ is weakly flexible for $\Phi_\alpha$. We use the alternative 
definition of a weakly flexible pair given in Lemma~\ref{weakly flexible pair 2}. 
Let $f\colon \D' \mapsto \Phi_\alpha(U,V)$ be a map as in this assumption 
(see Lemma~\ref{weakly flexible pair 2}), where $\D' \subset \R^m$ denotes the closed unit ball 
of dimension $m\geq 0$. By Lemma~\ref{correspondence} $f$ can be canonically identified 
with a continuous map $g\colon  \D' \times \D \to \Phi(U, V)$ with 
$$g(\partial (\D' \times \D))=g(\partial \D' \times \D \cup  \D' \times \partial \D)\subset \Phi(U\cup V).$$ 
Since $(A,B)$ is a weakly flexible pair for $\Phi$ and $\D' \times \D$ is a disc of dimension $m+n$, 
there are neighborhoods $U'$ and $V'$ of $A$ resp.~$B$ and 
a homotopy $g_t$ connecting $r_{U',V'}\circ g$ to some $g_1$ with values in $\Phi(U' \cup V')$, 
$g_t$ is independent of $t$ when restricted to the boundary, and $r_{A^\circ} \circ p \circ g_t$ stays in a 
prescribed neighborhood of $r_{A^\circ} \circ p \circ g_0$. Since $g_t$ is independent of $t$ when restricted to 
$\D' \times \partial \D$, $g_t$ is then again canonically identified with a homotopy 
$$f_t\colon  \D' \to \Phi_\alpha (U',V')$$ satisfying $f_0=r_{U',V'} \circ f$. Since $g_t$ is also independent of $t$ 
when restricted to $\partial \D' \times  \D$, $f_t|\partial \D'$ is independent of $t$ as well. 
It follows from $g_1$ having values in $\Phi(U'\cup V')$ that $f_1$ has values in $\Phi_{\alpha}(U' \cup V')$. 
The fact that $r_{A^\circ} \circ p \circ f_t$ 
approximates $r_{A^\circ} \circ p \circ f_0$ as well as we desire follows immediately from the corresponding 
approximation property of $g_t$. This shows that $(A,B)$ is 
weakly flexible for $\Phi_\alpha$ and concludes the proof of (3) and (5). 
The proof that an ordered flexible pair $(A,B)$ for $\Phi$ is also ordered flexible for $\Phi_\alpha$ is similar (but a little simpler).
Statement (4) and (6) follow immediately. 
Also the proof that $\Phi_\alpha \hookrightarrow \Psi_\alpha$ is a weak homotopy equivalence if $\Phi \hookrightarrow \Psi$ is 
can be obtained in the same manner as the proof that weakly flexible pairs for $\Phi$ are weakly flexible for $\Phi_\alpha$.
We leave the details to the reader. 
\end{proof}

\subsection{Complexes} In this subsection we define the notion of a complex. 
Let $\Aeul=\{A_i\}_{i \in \Ieul}$ be a point-finite family of subsets of some topological space X (that is, 
every element of $X$ is contained in at most finitely many elements of $\Aeul$). 
The \textit{nerve} $N(\Aeul)$ of $\Aeul$ is defined as the set of (finite) subsets 
$I$ of the index set $\Ieul$ for which the intersection $A_I\coloneqq \cap_{i \in I} A_i$ is non-empty. 
Let $E$ be the real vector space spanned by linearly independent vectors $\{e_i\}_{i\in \Ieul}$ 
and let us topologize $E$ by the final topology induced by injective linear maps 
$\R^n \to E$, $n\in \N$. Clearly any finite dimensional subspace of $E$ carries the 
natural topology. For $I \in N(\Aeul)$, denote by $|I|$ the simplex defined by the 
convex hull of $\{e_i\colon  i \in I\} \subset E$. Note that the union $N|\Aeul|$ of all 
$|I|$ with $I \in N(\Aeul)$ is the geometric realization of the abstract simplicial complex $N(\Aeul)$. 
We also set $N_k(\Aeul)\coloneqq \{I \in N(\Aeul)\colon  \dim |I|\leq k\}$ and $N_k|\Aeul|$ the 
corresponding geometric realization for $k\geq 0$. A family $\Ueul=\{U_i\}_{i \in \Ieul}$ is 
a \textit{neighborhood} of $\Aeul=\{A_i\}_{i \in \Ieul}$ if $U_i$ is a neighborhood of $A_i$ for every $i \in \Ieul$. 
$\Ueul$ is called \textit{faithful} if its nerve is equal to the one of $\Aeul$. 
\begin{lemma}
\label{faithful neighborhood}
Let $X$ be normal and $\Aeul$ a countable locally finite closed cover of $X$. 
Then $\Aeul$ has a closed faithful neighborhood. 
\end{lemma}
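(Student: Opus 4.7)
The plan is to first construct, by countable induction, open sets $V_i \supset A_i$ whose nerve agrees with $N(\Aeul)$, and then to shrink each $V_i$ to a closed set $U_i$ with $A_i \subset \operatorname{int} U_i \subset U_i \subset V_i$ by normality. Since $A_i \subset U_i \subset V_i$, the nerve of $\{U_i\}$ is squeezed between $N(\Aeul)$ and itself, so the $\{U_i\}$ form the desired closed faithful neighborhood.

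Enumerate $\Ieul = \{1, 2, \ldots\}$ and build $V_1, V_2, \ldots$ recursively, maintaining the invariant $(*_n)$: for all finite $J \subset \N$ and $K \subset \{1, \ldots, n-1\}$ with $J \cup K \notin N(\Aeul)$, one has $\bigcap_{j\in J} A_j \cap \bigcap_{k \in K} \overline{V_k} = \emptyset$. The base case $(*_1)$ is the tautology $\bigcap_{j\in J}A_j=\emptyset$ for finite $J\notin N(\Aeul)$, and once all $(*_n)$ hold, applying $(*_{\max I+1})$ with $J=\emptyset$ and $K=I$ gives $\bigcap_{i\in I} V_i \subset \bigcap_{i\in I}\overline{V_i}=\emptyset$ for every finite $I \notin N(\Aeul)$, which is the desired faithfulness.

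For the inductive step, set $F_{J, K'} := \bigcap_{j \in J} A_j \cap \bigcap_{k \in K'} \overline{V_k}$ and let $F := \bigcup F_{J, K'}$, the union taken over all finite $J \subset \N$ and $K' \subset \{1, \ldots, n-1\}$ with $J \cup K' \cup \{n\} \notin N(\Aeul)$. Applying $(*_n)$ with $J \cup \{n\}$ in place of $J$ shows that each $F_{J, K'}$ is disjoint from $A_n$. It will suffice to find an open $V_n \supset A_n$ with $\overline{V_n} \cap F = \emptyset$: then $(*_{n+1})$ is checked directly, since pairs $(J, K)$ with $n \notin K$ reduce to $(*_n)$, and pairs with $n \in K$ (writing $K = K' \cup \{n\}$) follow from $\overline{V_n} \cap F_{J, K'} = \emptyset$.

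The main obstacle is that the index set of the union defining $F$ is infinite, so a priori $F$ need not be closed and normality cannot be invoked. This is where local finiteness of $\Aeul$ enters: every $x \in X$ has a neighborhood $N$ meeting only finitely many $A_i$, and if $F_{J, K'} \cap N \neq \emptyset$ then $J$ is forced to lie in the finite set $\{i : A_i \cap N \neq \emptyset\}$; since $K'$ also varies over the (finite) power set of $\{1, \ldots, n-1\}$, only finitely many $F_{J, K'}$ meet $N$. Thus the family $\{F_{J, K'}\}$ is locally finite, its union $F$ is closed, and normality applied to the disjoint closed sets $A_n$ and $F$ yields an open $V_n \supset A_n$ with $\overline{V_n} \cap F = \emptyset$, completing the induction.
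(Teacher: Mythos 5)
Your proof is correct and takes essentially the same route as the paper's: enlarge the $A_n$ one at a time to closed neighborhoods, using local finiteness of $\Aeul$ to see that the union of the forbidden intersections is closed and normality to separate $A_n$ from it. Your invariant $(*_n)$ is simply an explicit bookkeeping of the paper's observation that each single replacement preserves the nerve.
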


\begin{proof}
Let $A \in \Aeul=\{A_i\}_{i \in \N}$ be fixed. 
Recall that the union of the elements of a locally finite closed family is closed. 
Since $\Aeul$ is locally finite, 
$\{A_I\colon  I \in N(\Aeul)\}$ is locally finite too. In particular 
$$U\coloneqq X\setminus \bigcup_{I \in N(\Aeul), \atop A\cap A_I=\emptyset} A_I,$$ 
is an open neighborhood of $A$. Note that $\{U\} \cup \Aeul\setminus \{A\}$ has the same nerve as $\Aeul$. 
If $B\subset U$ is a closed neighborhood of $A$, then $\{B\}\cup \Aeul\setminus \{A\}$ is a locally finite closed cover 
with the same nerve as $\Aeul$. Replacing inductively $A_n$, $n \in \N$ in $\{B_1,B_2, \ldots, B_{n-1}, A_n, A_{n+1}, \ldots\}$ 
by such a neighborhood $B_n$ yields 
for $n \to \infty$ a closed faithful neighborhood $\Beul$ of $\Aeul$.
\end{proof}

For a sheaf $\Psi$ we say a map has values in $\Psi$ when we actually mean a map with 
values in the disjoint union of all sets of local sections of $\Psi$. 
For an open set $V\subset X$, $\Psi_V$ denotes the disjoint union  of those $\Psi(U)$ with $V\subset U$. 
Then $r_V\colon  \Psi_V \to \Psi(V)$ denotes the map that is given by the restriction morphisms $\Psi(U) \to \Psi(V)$ of $\Psi$ for $V \subset U$ open. 
We call $r_V$ a \textit{restriction} and note that the new use of the symbol $r_V$ generalizes our earlier use consistently.

\begin{definition}
Let $\Aeul$ be a point-finite family of $X$. 
A \textit{homotopy of complexes} over $\Aeul$ (with respect to the sheaf $\Psi$) 
is a map $$f\colon  [0,1] \times N|\Aeul|  \to \Psi$$ with the property that there is an open neighborhood $\Ueul$ of 
$\Aeul$ such that $f([0,1] \times |I|) \subset \Psi_{U_I}$ and $r_{U_I} \circ f|[0,1] \times |I|$ is continuous for 
all $I \in N(\Aeul)$. A \textit{complex} over $\Aeul$ is a map $N|\Aeul| \to \Psi$ with 
the properties obtained by fixing $t\in [0,1]$ in the definition of a homotopy of complexes.
\end{definition}
If we replace the domain $N|\Aeul|$ by $N_k|\Aeul|$ in the definition of a complex we call the resulting map a \textit{$k$-skeleton} instead. 
If the family $\Aeul$ or the sheaf $\Psi$ corresponding to a complex $f\colon N|\Aeul| \to \Psi$ are clear from the context we will sometimes omit to 
mention these.
\begin{remark}
\label{subcover}
If a point-finite cover $\Aeul=\{A_i\}_{i \in \Ieul}$ refines a point-finite cover 
$\Ueul=\{U_j\}_{j \in \Jeul}$, then a complex $f\colon N|\Ueul| \to \Psi$ induces a complex $g\colon N|\Aeul| \to \Psi$. 
Explicitly this can be seen by setting $g=f \circ \lambda$ where $\lambda\colon N|\Aeul| \to N|\Ueul|$ 
is the restriction of a linear map given by $e_i \mapsto e_{j(i)}$ for some $j(i)$ with $A_i\subset U_{j(i)}$.
\end{remark}

\begin{definition}
Two complexes $f,g\colon  N|\Aeul| \to \Psi$ are called 
\textit{equivalent} if there is a neighborhood $\Ueul=\{U_i\}_{i \in \Ieul}$ of $\Aeul=\{A_i\}_{i \in \Ieul}$ such that 
$r_{U_I} \circ f| |I| = r_{U_I} \circ g||I|$
for $I \in N(\Aeul)$.
\end{definition}
It is straightforward to see that equivalence of complexes over $\Aeul$ is an 
equivalence relation. Since in this text we care about complexes 
only up to equivalence, we write $f=g$ if $f$ and $g$ are equivalent. 
Note that we can compose homotopies of complexes $f_t$ and $g_t$ (over $\Aeul$) 
if $f_1$ is equivalent to $g_0$ 
by setting $h_{t/2}=f_t$ for $t\in [0,1]$ and 
$h_{t/2+1/2}=g_t$ for $t \in (0,1]$.

\subsection{The initial complex} In this subsection we prove 

\begin{proposition}
\label{comp}
Let $X$ be a paracompact Hausdorff space that has an exhaustion by finite dimensional closed subsets. 
Let $\Phi \hookrightarrow \Psi$ be a local weak homotopy equivalence 
of topological sheaves on $X$ and let $f \in \Psi(X)$. Then there is a homotopy 
of complexes $h$ with values in $\Psi$ connecting the complex $h_0$ 
induced by $f$ to a complex $h_1$ with values in $\Phi$. 
\end{proposition}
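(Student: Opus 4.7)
The plan is to construct $h$ by induction on the skeleta of the nerve of a sufficiently fine cover, using local weak homotopy equivalence one simplex at a time to push $h_1$ further into $\Phi$. First, using paracompactness and the finite-dimensional exhaustion, I choose a countable, locally finite closed cover $\Aeul=\{A_i\}_{i\in\N}$ of $X$ together with a faithful open neighborhood $\Ueul=\{U_i\}$ via Lemma~\ref{faithful neighborhood}, refined so that every nonempty $U_I$ lies in an open set on which Definition~\ref{d1} is applicable. The finite-dimensional exhaustion ensures that the nerve has finite dimension over every piece of $X$, so the induction below terminates locally after finitely many steps.

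Fix a sequence $0=s_0<s_1<s_2<\cdots\to 1$. The initial complex $h_0$ is the constant map $r_{U_I}(f)$ on each simplex $|I|$. I build $h$ inductively over the skeleta $N_k|\Aeul|$, maintaining the invariant that after stage $k$ the homotopy is defined on $[0,1]\times N_k|\Aeul|$, agrees with $h_0$ at time $0$, and lies in $\Phi$ and is constant in $t$ for $t\geq s_{k+1}$. The base step $k=0$ follows from Definition~\ref{d1} with $n=0$: after shrinking each $U_i$, the constant value $r_{U_i}(f)$ is connected by a path to some section in $\Phi(U_i)$; I run this path over $[0,s_1]$ and keep it constant on $[s_1,1]$.

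For the inductive step $k-1\Rightarrow k$, fix a $k$-simplex $|J|$. The homotopy extension property of the CW-pair $(|J|,\partial|J|)$ applied to the data on $(|J|\times\{0\})\cup(\partial|J|\times[0,s_k])$ yields a continuous extension
$$
\tilde H\colon|J|\times[0,s_k]\to\Psi(U_J)
$$
with $\tilde H_0\equiv r_{U_J}(f)$ and boundary values matching the previously constructed $h$. Since $\tilde H_{s_k}\colon|J|\to\Psi(U_J)$ has boundary values in $\Phi(U_J)$, Definition~\ref{d1} furnishes, after shrinking $U_J$ to $U_J'$, a homotopy $G_s\colon|J|\to\Psi(U_J')$ with $G_0=r_{U_J'}\circ\tilde H_{s_k}$, $G_s|\partial|J|$ independent of $s$, and $G_1(|J|)\subset\Phi(U_J')$. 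Reparameterizing $G$ onto $[s_k,s_{k+1}]$ and extending constantly over $[s_{k+1},1]$ gives $h$ on $|J|$. The independence of $G_s|\partial|J|$ from $s$, together with the stage $k-1$ invariant that makes $h$ constant on $\partial|J|\times[s_k,1]$, guarantees compatibility along $\partial|J|$.

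The main difficulty is the bookkeeping: each application of Definition~\ref{d1} shrinks some $U_J$, and the shrinkings must be organized so that the result is still a faithful open neighborhood of $\Aeul$ and the simplex-by-simplex pieces assemble into a single homotopy of complexes. I would process simplices in an order compatible with the exhaustion, so that shrinkings affecting any given compact piece of $X$ are finite in number; combined with point-finiteness of the nerve this yields a limit faithful open neighborhood and the continuity of $h$ on each $[0,1]\times|I|$ required by the definition of a homotopy of complexes.
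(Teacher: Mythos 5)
Your overall scheme --- skeletal induction on a nerve, one application of the local weak homotopy equivalence per simplex, reparametrized into successive time intervals, with stabilization on compact pieces --- is the right shape, and several ingredients (the HEP step on $(|J|\times\{0\})\cup(\partial|J|\times[0,s_k])$, the "constant for $t\ge s_{k+1}$" invariant) match what the paper does with $Q_J=\{0\}\times|J|\cup[0,1]\times\partial|J|$. But there is a genuine gap at the central step. Definition~\ref{d1} is a \emph{pointwise} local property: given $p\in X$, a neighborhood $U$ of $p$ and a map $\D\to\Psi(U)$ with boundary in $\Phi(U)$, it produces a deformation only over some smaller neighborhood $V$ of the single point $p$. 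It does \emph{not} furnish, "after shrinking $U_J$ to $U_J'$", a homotopy over a neighborhood $U_J'$ of the whole compact intersection $A_J$. Covering $A_J$ by finitely many such point-neighborhoods does not help, because the homotopies obtained over overlapping small neighborhoods need not agree on the overlaps and there is no gluing mechanism available at this stage (that is exactly what weak flexibility provides, and it is not assumed in Proposition~\ref{comp}). Your opening move of fixing the cover $\Aeul$ "so that Definition~\ref{d1} is applicable on every $U_I$" cannot be made sense of: the neighborhood $V$ produced by the definition depends on the map being deformed, i.e.\ on the homotopy of $(k-1)$-skeletons already constructed, so it cannot be arranged before the induction starts.

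Consequently the nerve cannot stay fixed. The paper's Lemma~\ref{inductive step} resolves this by choosing, at stage $k$ and depending on the already-built homotopy $g$, a small neighborhood $U_x$ for each point $x$ outside the already-settled region, and then replacing the cover by a genuinely finer one built from these $U_x$; the nerve grows at every step. This creates the second problem your proposal never confronts: refining the cover could blow up its order, hence the dimension of the nerve over a compact piece, and the skeletal induction would never terminate there. The paper controls this with Lemmas~\ref{cover0} and~\ref{cover1}, obtaining $\ord\Ueul_{A_{n+1}}\le\ord\Veul_{A_n}+\dim A_{n+2}+1$ together with $\Ueul_{A_{n-1}}=\Veul_{A_{n-1}}$ and agreement of the new homotopy with the old one over $N|\Ueul_{A_{n-1}}|$; only this combination makes the construction stabilize on each $A_n$ after finitely many steps and lets the diagonal limit in the proof of Proposition~\ref{comp} exist. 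To repair your argument you would need to replace the fixed-nerve induction by this refine-and-stabilize scheme.
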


The proof of Proposition~\ref{comp} depends on some topological properties of $X$. 
For a topological space $X$ and an open cover $\{V_j\}_{j \in \Jeul}$, 
a \textit{shrinking} of $\{V_j\}_{j \in \Jeul}$ is an open cover 
$\{S_j\}_{j \in \Jeul}$ (indexed by the same set $\Jeul$) such that $\overline S_j\subset V_j$ for all $j \in \Jeul$.

\begin{remark}
\label{l1}
It is well-known that in a paracompact Hausdorff space any open cover has a shrinking.
\end{remark}

In the following we set for a family of subset $\Ueul$ of $X$ and $A\subset X$, $\Ueul_A\coloneqq \{U \in \Ueul\colon  A\cap U \not = \emptyset\}$. 
Moreover $\ord \Ueul = \sup_{x \in X} |\Ueul_{\{x\}}| \in \N\cup \{\infty\}$ denotes the order of $\Ueul$. We use the conventions $\ord \emptyset = 0$. 

\begin{lemma}
\label{cover0}
Every open cover $\Ueul$ of a finite dimensional paracompact Hausdorff space $B$ 
has an locally finite open refinement of order at most $\dim B +1$.
\end{lemma}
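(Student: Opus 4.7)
Write $n = \dim B$. The plan is to upgrade the defining property of covering dimension, which directly supplies order-$(n+1)$ open refinements only for finite open covers, to arbitrary open covers in the paracompact setting by exploiting paracompactness to localize the construction.

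As a first step, I would use paracompactness to replace $\Ueul$ by a locally finite open refinement $\Veul = \{V_j\}_{j \in J}$; it then suffices to produce a locally finite open refinement of $\Veul$ of order at most $n+1$. Applying Remark~\ref{l1} twice, I would take nested open shrinkings $\{W_j\}$ and $\{S_j\}$ with $\overline{S_j} \subset W_j$ and $\overline{W_j} \subset V_j$, which provide the room for localized modifications and rely on the normality inherited from paracompact Hausdorff spaces.

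The main step is an inductive refinement indexed by a well-ordering of $J$. At stage $j$, local finiteness of $\Veul$ makes the subfamily $\{V_{j'} : \overline{W_{j'}} \cap \overline{W_j} \neq \emptyset\}$ finite, so the defining property of covering dimension supplies an open refinement of this finite subfamily, on a neighborhood of $\overline{S_j}$, of order at most $n+1$. I would paste this local refinement into the partial refinement constructed at earlier stages, using $\overline{W_j}$ to confine the change and $\overline{S_j}$ to guarantee coverage, passing to suitable limits at limit ordinals. Local finiteness is preserved in the transfinite limit since each modification is supported in some $V_j$ and $\Veul$ is already locally finite.

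The principal obstacle is keeping the global order bounded by $n+1$ throughout the transfinite process: on the overlap between a newly refined neighborhood of $\overline{S_j}$ and the cover built at earlier stages, a point might a priori belong to too many pieces. This is handled by choosing the local refinement at stage $j$ so that it simultaneously refines the cover inherited from earlier stages on the overlap, using the nested shrinkings $\overline{S_j} \subset W_j \subset \overline{W_j} \subset V_j$ together with normality to confine the new pieces precisely where needed. Once both the order bound and local finiteness transfer through the induction, the resulting locally finite open cover refines $\Ueul$ with order at most $n+1$.
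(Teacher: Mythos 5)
There is a genuine gap here, and also one concretely false step. The false step: local finiteness of $\Veul$ does \emph{not} make the subfamily $\{V_{j'} \colon \overline{W_{j'}} \cap \overline{W_j} \neq \emptyset\}$ finite. Local finiteness controls a neighborhood of each point, not an arbitrary closed set: in $\R^2$ take $V_0$ an infinite open strip and $V_k$, $k \in \Z$, a locally finite sequence of small balls meeting it; then $\overline{W_0}$ meets infinitely many $\overline{W_k}$. Since $B$ is only assumed paracompact, the sets $\overline{W_j}$ need not be compact, so your stage-$j$ subfamily can be infinite and the finite-cover form of the covering dimension does not apply to it. Beyond that, the difficulty you yourself identify as principal --- keeping the order at most $\dim B +1$ on the overlap between the newly produced local refinement and the cover built at earlier stages --- is asserted rather than resolved: arranging that the new pieces refine the inherited ones does not prevent a point from lying simultaneously in an old piece (which must be retained for coverage away from $\overline{W_j}$) and in several new ones, and shrinking or discarding old pieces on the overlap threatens both coverage and the order bound at earlier-stage points. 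Carrying such local modifications coherently through a transfinite induction is essentially the content of Dowker's theorem relating dimension to infinite open covers; your sketch does not close that argument.

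The paper's proof avoids all of this with a short amalgamation trick, and the comparison is instructive. It picks a locally finite open refinement $\Veul$ of $\Ueul$ and then \emph{any} open refinement $\Weul$ of $\Veul$ of order at most $\dim B+1$, with no local finiteness demanded of $\Weul$ (this is the standard dimension-theoretic input). Choosing for each $W \in \Weul$ an element $V(W) \in \Veul$ with $W \subset V(W)$ and setting $V' \coloneqq \bigcup_{V(W)=V} W$, the family $\Veul' = \{V' \colon V \in \Veul\}$ covers $B$, refines $\Ueul$, is locally finite because $V' \subset V$ and $\Veul$ is locally finite, and satisfies $\ord \Veul' \leq \ord \Weul$ because distinct sets $V'$ containing a point $x$ arise from distinct sets $W$ containing $x$. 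The point you are missing is that the order bound and local finiteness need not be achieved simultaneously: local finiteness can be restored afterwards, by regrouping along the locally finite index family, without increasing the order.
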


\begin{proof}
Pick a locally finite open refinement $\Veul$ of $\Ueul$ and then an open 
refinement $\Weul$ of $\Veul$ of order at most $\dim B +1$. 
Fix for every element $W \in \Weul$ an element $V(W) \in \Veul$ with $W \subset V(W)$, set 
$$V'\coloneqq \bigcup_{V(W)=V} W, \ \ V \in \Veul$$ and 
$\Veul'\coloneqq \{V'\colon  V \in \Veul\}$. The open family $\Veul'$ covers $B$ since $\Weul$ covers $B$. 
It is locally finite since $\Veul$ is locally finite, and it is easy to see that $\ord \Veul' \leq \ord \Weul \leq \dim B +1$. 
This finishes the proof. 
\end{proof}

\begin{lemma}
\label{cover1}
Let $X$ be a paracompact Hausdorff space, $A,B\subset X$ closed such that $A\subset B^\circ$ and $\dim B< \infty$ and let $W\subset A$ open. 
Then any open family $\Ueul$ of $X$ which covers $X\setminus W$ can be refined by a locally finite open family $\Aeul$ which covers $X\setminus W$ such that 
$\Aeul_A$ is of order at most $\dim B +1$.
\end{lemma}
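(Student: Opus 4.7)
The plan is to treat the region near $A$ and the region far from $A$ separately: near $A$ I exploit finite dimensionality of $B$ via Lemma~\ref{cover0}, and far from $A$ I use paracompactness of $X$ directly. The two pieces are glued using a buffer zone between $A$ and $X \setminus B^\circ$. To set this up, since $X$ is paracompact Hausdorff and hence normal, I would first choose open sets $V_1, V_2$ with $A \subset V_1 \subset \bar V_1 \subset V_2 \subset \bar V_2 \subset B^\circ$.

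For the near-$A$ piece, note that $\bar V_2$ is closed in $X$ (hence paracompact Hausdorff) and closed in the normal space $B$, whence $\dim \bar V_2 \leq \dim B < \infty$. The family $\{U \cap \bar V_2 \colon U \in \Ueul\} \cup \{W\}$ is an open cover of $\bar V_2$ because $\Ueul$ covers $X \setminus W$. Lemma~\ref{cover0} yields a locally finite open (in $\bar V_2$) refinement $\cE$ of order at most $\dim B + 1$. Discard those $E \in \cE$ assigned to the parent $W$ (these are contained in $W$), leaving $\cE'$ which still covers $\bar V_2 \setminus W$. For each $E \in \cE'$ with parent $U_E \in \Ueul$, extend $E$ to an open $\tilde E \subset X$ with $\tilde E \cap \bar V_2 = E$, and set $\hat E := \tilde E \cap U_E \cap V_2$. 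Then $\hat E \subset V_2$ is open in $X$, $\hat E \subset U_E$, and since $A \subset V_2$ one has $\hat E \cap A = E \cap A$; moreover $\hat E \subset E$, so the order of $\{\hat E\}$ is bounded by that of $\cE$.

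For the far-from-$A$ piece, apply paracompactness of $X$ to the open cover $\Ueul \cup \{V_1\}$ of $X$ to get a locally finite open refinement $\cG = \cG_U \sqcup \cG_V$ (split by assigned parent), and set $\cH := \{G \cap (X \setminus \bar V_1) \colon G \in \cG_U\}$. This family is open, refines $\Ueul$, avoids $A$ (as $A \subset V_1$), and covers $X \setminus \bar V_1$: any $x \notin \bar V_1$ lies in some element of $\cG$, and cannot lie in a $\cG_V$-element since those are contained in $V_1$. Local finiteness of $\cH$ is inherited from $\cG$. Setting $\Aeul := \{\hat E \colon E \in \cE'\} \cup \cH$, I would check that $\Aeul$ refines $\Ueul$, is locally finite (union of two locally finite families, the first because $\{\hat E\} \subset V_2$ inherits local finiteness from $\cE$ on $\bar V_2$), covers $X \setminus W = (\bar V_1 \setminus W) \cup (X \setminus \bar V_1)$ (the first piece by $\{\hat E\}$ since any $x \in \bar V_1 \setminus W$ lies in some $E \in \cE'$ and then $x \in U_E \cap V_2$, the second piece by $\cH$), and satisfies that $\Aeul_A = \{\hat E \colon E \cap A \neq \emptyset\}$ has order at most $\dim B + 1$ (since $\cH$ avoids $A$ and $\hat E \subset E$).

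The main technical point is calibrating the extension $\hat E := \tilde E \cap U_E \cap V_2$. The intersection with $U_E$ preserves refinement of $\Ueul$, and the intersection with $V_2$ acts as a buffer that simultaneously keeps $\{\hat E\}$ confined to $\bar V_2$ (so local finiteness transfers from $\cE$ in $\bar V_2$ to $X$), preserves coverage of $\bar V_1 \setminus W$ (since $\bar V_1 \subset V_2$), and retains $\hat E \cap A = E \cap A$ (since $A \subset V_2$), so the order bound on $A$ survives the passage from $\bar V_2$ to $X$.
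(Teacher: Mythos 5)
Your proof is correct and follows essentially the same route as the paper's: a near-$A$ piece obtained by applying Lemma~\ref{cover0} to a finite-dimensional closed subset of $B$ covered by $\{W\}$ together with traces of $\Ueul$, glued to a far piece obtained from an ordinary locally finite refinement of $\Ueul$ plus a set swallowing $W$, whose elements are then cut off from (a neighborhood of) $A$. The only difference is that the paper applies Lemma~\ref{cover0} to $B$ itself and afterwards intersects with $B^\circ$, which makes your buffer sets $V_1,V_2$, the extension of relatively open sets to open sets of $X$, and the appeal to $\dim \overline{V_2}\leq \dim B$ unnecessary.
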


\begin{proof}
Let $\Ueul$ be an open family which covers $X\setminus W$. Since the union 
of two locally finite families remains locally finite, it suffice to find (1) a 
locally finite open family which refines $\Ueul$, covers $X\setminus B^\circ$ and whose 
elements do not intersect $A$ and (2) a locally finite open family of order at 
most $\dim B + 1$ which refines $\Ueul$ and covers $B^\circ \setminus W$. For (1) we 
can refine $\Ueul \cup \{W\}$ by a locally finite open cover of $X$ and intersect the elements of 
the resulting family with the open set $X\setminus A$. For (2) apply Lemma~\ref{cover0} to the 
open cover $\{W\} \cup \{U\cap B\colon  U \in \Ueul\}$ of $B$, remove from the 
obtained family the elements which are contained in $W$ and intersect the remaining 
elements with $B^\circ$. The resulting family does the job. 
\end{proof}

\begin{lemma}
\label{neighborhood1}
Let $X$ be a topological space, $\Veul=\{V_i\}_{i \in \Ieul}$ a locally finite open 
cover and $\Seul=\{S_i\}_{i \in \Ieul}$ a shrinking of  $\{V_i\}_{i \in \Ieul}$. 
Then for every point $x \in X$ there is an open neighborhood $W$ of $x$ with 
the property $W \subset V_I$ whenever $W \cap S_I \not = \emptyset$.
\end{lemma}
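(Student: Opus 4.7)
The plan is to build $W$ by intersecting an initial neighborhood of $x$ with two kinds of open sets: one kind ensures $W \subset V_i$ for all indices $i$ where $x \in V_i$, and the other kind removes any $S_i$ that could cause trouble. Local finiteness guarantees only finitely many indices need to be handled, which keeps the intersection open.

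More concretely, I would first use local finiteness of $\Veul$ to pick an open neighborhood $W_0$ of $x$ meeting only finitely many sets $V_i$. Split these finitely many indices into
\[
A \coloneqq \{i \in \Ieul : x \in V_i\}, \qquad B \coloneqq \{i \in \Ieul : W_0 \cap V_i \neq \emptyset \text{ and } x \notin V_i\}.
\]
For each $i \in B$, since $\overline{S_i} \subset V_i$ and $x \notin V_i$, the point $x$ lies in the open set $X \setminus \overline{S_i}$. I would then define
\[
W \coloneqq W_0 \cap \bigcap_{i \in A} V_i \cap \bigcap_{i \in B} (X \setminus \overline{S_i}),
\]
which is open (as a finite intersection of open sets) and contains $x$.

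To verify the conclusion, suppose $I \in N(\Seul)$ satisfies $W \cap S_I \neq \emptyset$. Then $W \cap S_i \neq \emptyset$ for every $i \in I$, and since $S_i \subset V_i$ this forces $W_0 \cap V_i \neq \emptyset$, i.e.\ $i \in A \cup B$. The index $i$ cannot lie in $B$, since by construction $W$ is disjoint from $\overline{S_i} \supset S_i$; hence $i \in A$, which gives $W \subset V_i$. Taking the intersection over $i \in I$ yields $W \subset V_I$, as required.

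There is no real obstacle here: the argument is a direct unpacking of local finiteness together with the defining property $\overline{S_i}\subset V_i$ of a shrinking. The only point that warrants care is using $X \setminus \overline{S_i}$ rather than $X \setminus S_i$ in the definition of $W$, so that $x$ actually lies in each of these sets and $W$ avoids all of $\overline{S_i}$ (not just $S_i$); beyond that, everything reduces to set-theoretic bookkeeping on a finite index set.
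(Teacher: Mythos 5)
Your proof is correct and follows essentially the same route as the paper: shrink to a neighborhood meeting only finitely many members of the cover, intersect with the $V_i$ containing $x$ and with the complements $X\setminus\overline{S_i}$ for the remaining relevant indices, and then check the two cases. The only cosmetic difference is that the paper splits indices according to whether $x\in\overline{S_i}$ and takes a neighborhood meeting only finitely many $\overline{S_i}$, whereas you split according to whether $x\in V_i$; both versions work for the same reason.
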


\begin{proof}
Since $\Veul$ is locally finite and $\Seul$ is a shrinking of $\Veul$ there is 
a neighborhood $U$ of $x$ that meets 
only finitely many $\overline S_i$, $i \in \Ieul$. Therefore
$$W\coloneqq  U\cap \bigcap_{x \in \overline S_i} V_i \cap \bigcap_{x \not \in \overline S_i, \atop U \cap \overline S_i \not = \emptyset} X \setminus \overline S_i$$ 
is a finite intersection of open sets and hence an open neighborhood of $x$. One can check that $W$ does the job.
\end{proof}

The main work of the proof of Proposition~\ref{comp} is done in Lemma~\ref{inductive step}, 
in whose proof a suitable homotopy of $k+1$-skeletons is constructed from a given homotopy of $k$-skeletons. 
\begin{lemma}
\label{inductive step}
Let $A_0= A_1=\emptyset$, let $A_2\subset A_3 \subset A_4 \subset \cdots$ be an exhaustion 
of finite dimensional closed subsets of $X$ and let $\Phi \hookrightarrow \Psi$ 
be a local weak homotopy equivalence of sheaves on $X$. Let $k\geq 0$, $\Veul$ a locally 
finite open cover of $X$ such that each element of $\Veul$ is 
contained in $A_{i+1}\setminus A_{i-1}$ for suitable $i \in \N$, and let $n \in \N$ be 
some fixed number such that $\ord \Veul_{A_n}\leq k$. Moreover let 
$$g\colon [0,1] \times N_k|\Veul| \to \Psi$$ be a homotopy of $k$-skeletons which connects the 
sectionally constant complex given by $f$ to a $k$-skeleton with values in $\Phi$.
Then there is a locally finite open cover $\Ueul$ of $X$ refining $\Veul$ and a $k+1$-skeleton 
$$h\colon [0,1] \times N_{k+1}|\Ueul| \to \Psi$$ connecting the sectionally constant 
complex given by $f$ to a $k+1$-skeleton with values in $\Phi$
satisfying the following properties: The elements of $\Ueul$ are contained in $A_{i+1}\setminus A_{i-1}$ for suitable $i \in \N$, 
$\ord{\Ueul_{A_n}} \leq \ord{\Veul_{A_n}}$, $\ord{\Ueul_{A_{n+1}}}\leq \ord \Veul_{A_{n}}+\dim A_{n+2}+1$, 
$\Ueul_{A_{n-1}}=\Veul_{A_{n-1}}$ and 
\begin{align*}
g|[0,1]\times N|\Veul_{A_{n-1}}|=h|[0,1]\times N|\Ueul_{A_{n-1}}|.
\end{align*}
\end{lemma}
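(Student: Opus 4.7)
The argument splits into two stages: first, refine $\Veul$ to a cover $\Ueul$ with the required combinatorial properties; second, extend the homotopy $g$ of $k$-skeletons (pulled back to $\Ueul$) to a homotopy $h$ of $(k+1)$-skeletons by applying the local weak homotopy equivalence to each new $(k+1)$-simplex. I begin with the refinement: pick an open neighborhood $W$ of $A_n$ contained in the interior of $A_{n+1}$, small enough that $\Veul_{A_n}$ covers $W$ and no element of $\Veul\setminus\Veul_{A_n}$ meets $W$; both conditions are possible by local finiteness of $\Veul$. Lemma~\ref{cover1}, applied with $A=A_{n+1}$, $B=A_{n+2}$, and this $W$, yields a locally finite open refinement $\Aeul$ of $\Veul$ covering $X\setminus W$ with $\ord \Aeul_{A_{n+1}}\leq \dim A_{n+2}+1$. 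Set $\Ueul = \Veul_{A_n}\cup \Aeul$. Elements of $\Ueul$ inherit the containment in $A_{i+1}\setminus A_{i-1}$ from $\Veul$, and one checks directly that $\Ueul_{A_{n-1}}=\Veul_{A_{n-1}}$, $\ord \Ueul_{A_n}=\ord \Veul_{A_n}$ (since $\Aeul$ does not meet $A_n$), and $\ord \Ueul_{A_{n+1}}\leq \ord \Veul_{A_n}+\dim A_{n+2}+1$ (bounding the contributions of $\Veul_{A_n}$ and $\Aeul_{A_{n+1}}$ separately at each point). By Remark~\ref{subcover}, $g$ induces a $k$-skeleton homotopy $\bar g$ over $\Ueul$; choosing the simplicial map to be the identity on vertices of $\Veul_{A_n}\subset \Ueul$ ensures $\bar g = g$ on $[0,1]\times N|\Ueul_{A_{n-1}}|$.

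Since $\ord \Ueul_{A_{n-1}}\leq \ord \Veul_{A_n}\leq k$, the complex $N|\Ueul_{A_{n-1}}|$ contains no $(k+1)$-simplex, so extending $\bar g$ only introduces new content away from $\Ueul_{A_{n-1}}$. For each $(k+1)$-simplex $|I|$ of $N_{k+1}|\Ueul|$ the restricted map $\bar g|[0,1]\times\partial|I|$ lands, after restriction, in $\Psi(U_I)$ with $U_I=\bigcap_{i\in I}U_i$; it equals the constant $r_{U_I}(f)$ at $t=0$ and takes values in $\Phi(U_I)$ at $t=1$. Extending by the constant $r_{U_I}(f)$ on $\{0\}\times|I|$ yields a continuous map $G\colon C\to \Psi(U_I)$ on the cup $C = \{0\}\times|I|\cup [0,1]\times\partial|I|$, which is homeomorphic to a closed $(k+1)$-ball whose boundary $\partial C = \{1\}\times\partial|I|$ maps into $\Phi(U_I)$. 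Applying Definition~\ref{d1} at a point of $U_I$ produces a shrinking $V_I\subset U_I$ and a homotopy $G_s\colon C\to \Psi(V_I)$ with $G_0 = r_{V_I}\circ G$, $G_s|\partial C$ independent of $s$, and $G_1(C)\subset \Phi(V_I)$. Using the standard homeomorphism $[0,1]\times|I|\cong C\times[0,1]$ that identifies $C$ with $C\times\{0\}$ (so that $\{1\}\times|I|$ corresponds to $C\times\{1\}\cup\partial C\times[0,1]$), the formula $(c,s)\mapsto G_s(c)$ defines a continuous map $h|[0,1]\times|I|\colon [0,1]\times|I|\to \Psi(V_I)$. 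It extends $\bar g$ on $[0,1]\times\partial|I|$, equals the constant $f$ at $t=0$, and sends $\{1\}\times|I|$ into $\Phi(V_I)$ because both $G_1$ and $G_s|\partial C$ lie in $\Phi$.

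For the simplex-wise extensions to assemble into a genuine homotopy of complexes over $\Ueul$, I would shrink each $U_i$ once to an open $U_i'\subset U_i$ such that $\bigcap_{i\in I}U_i'\subset V_I$ for every $(k+1)$-simplex $I$ containing $i$; local finiteness of $\Ueul$ makes this a finite condition at each vertex, so a coherent neighborhood system exists. The principal technical obstacle is precisely this geometric extension step: verifying the identification $[0,1]\times|I|\cong C\times[0,1]$ sending the cup to a face, and combining it with the coherent shrinking to stitch the LWHE-outputs together. Once that is in place, the fact that $G_s|\partial C$ is independent of $s$ is exactly what forces the individual extensions to agree on shared faces, so $h$ is well defined on $[0,1]\times N_{k+1}|\Ueul|$ and inherits all remaining properties by construction.
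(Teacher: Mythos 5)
There is a genuine gap, and it lies exactly where you flag ``the principal technical obstacle.'' You first fix the cover $\Ueul$ and only afterwards invoke Definition~\ref{d1} on each $(k+1)$-simplex $|I|$ of $N(\Ueul)$. But a local weak homotopy equivalence is a \emph{pointwise} statement: applied ``at a point of $U_I$'' it returns a neighborhood $V_I$ of that single point, with no control whatsoever on its size -- in particular $V_I$ need not contain $U_I$, nor any set that could serve as the $I$-fold intersection of a cover. Your proposed repair, a coherent shrinking $U_i'\subset U_i$ with $\bigcap_{i\in I}U_i'\subset V_I$ for every $(k+1)$-simplex $I$, is precisely the missing content: you would have to show that such a shrinking exists while $\{U_i'\}$ still covers $X$ (each $V_I$ may be a tiny ball around one point of a large $U_I$, and the conditions for the various $I$ interact), and moreover any such shrinking would alter the sets of $\Ueul$ and thereby jeopardize the already-verified conclusions $\Ueul_{A_{n-1}}=\Veul_{A_{n-1}}$ and $h=g$ over $N|\Ueul_{A_{n-1}}|$. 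The paper's proof avoids this by reversing the order of quantifiers: for each point $x\in X\setminus W$ it first chooses (using a shrinking $\Seul$ of $\Veul$ and Lemma~\ref{neighborhood1}) a neighborhood $U_x\subset X\setminus A_n$ small enough that the finitely many extensions over the relevant cups $Q_J$, $\dim|J|=k+1$, all exist with values in $\Psi(U_x)$, and only \emph{then} builds the new part $\Aeul$ of the cover by refining $\{U_x\}$ via Lemma~\ref{cover1}. Since $\ord\Veul_{A_n}\le k$ forces every $(k+1)$-simplex of the new nerve to contain a vertex from $\Aeul$, each such simplex satisfies $U_I\subset U_x$ for some $x$, and the pre-constructed extension simply restricts. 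This inversion is the essential idea of the proof and is absent from your argument.

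Two secondary points. First, your construction of $\Ueul$: requiring that no element of $\Veul\setminus\Veul_{A_n}$ meet an open neighborhood $W$ of $A_n$ is generally impossible (those elements are disjoint from $A_n$ but their closures may accumulate on it), and taking $\Aeul$ to merely refine $\Veul$ does not force its elements to avoid $A_n$ or $A_{n-1}$, which you need for $\ord\Ueul_{A_n}\le\ord\Veul_{A_n}$ and $\Ueul_{A_{n-1}}=\Veul_{A_{n-1}}$; in the paper these follow because $\Aeul$ refines the family $\{U_x\}$ with $U_x\subset X\setminus A_n$. Second, on the positive side, your identification of the cup $C=\{0\}\times|I|\cup[0,1]\times\partial|I|$ with a $(k+1)$-ball and the homeomorphism $[0,1]\times|I|\cong C\times[0,1]$ converting the homotopy $G_s$ into an extension over $[0,1]\times|I|$ is correct and spells out a step the paper asserts without comment; that part can be kept verbatim once the quantifier order is fixed.
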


\begin{proof}
Take a shrinking $\Seul$ of $\Veul=\{V_j\}_{j \in \Jeul}$ and set 
$$W\coloneqq \bigcup_{S \in \Seul_{A_n}} S.$$ 
We have $A_n\subset W$ since $\Seul$ covers $X$, and $W\subset A_{n+1}$ 
since $\Seul$ is a shrinking of $\Veul$ and the elements of $\Veul_{A_n}$ are contained in $A_{n+1}$ by our assumption 
on $\Veul$. Clearly $W$ is open. Let $x \in X \setminus W$ be fixed for the moment. 
We are going to choose a suitable (small) 
neighborhood $U_x$ of $x$. 
By Lemma~\ref{neighborhood1} we may pick a neighborhood $W_x \subset X\setminus A_n$ of $x$ 
such that $W_x\subset V_J$ whenever $W_x \cap S_J$ is non-empty. 
In addition, 
after possibly shrinking $W_x$ a bit, we may assume that $W_x$ is contained in 
some element of $\cS$. If for all $|J|$ of dimension $k+1$ the sets $W_x$ and $S_J$ are disjoint, $W_x$ is a 
suitable neighborhood of $x$ for our purpose and we set $U_x\coloneqq W_x$. 
Otherwise consider for every $|J|$ of dimension 
$k+1$ with $W_x \cap S_J \not = \emptyset$ the restriction of $g$ to 
$$Q_J=\{0\}\times |J| \cup [0,1] \times \partial |J| \subset [0,1] \times N|\Veul|.$$ 
From $W_x \cap S_J \not = \emptyset$ we get $W_x\subset V_J$ by the choice of $W_x$. 
Therefore $r_{W_x} \circ g|Q_J$ is well defined and continuous. Since the 
pair $\Phi \hookrightarrow \Psi$ is a local weak homotopy equivalence there 
is a neighborhood $U_{J,x} \subset W_x$ of $x$ such that the map $r_{U_{J,x}} \circ g|Q_J$ 
extends to $[0,1] \times |J|$ with values in $\Psi(U_{J,x})$ and the restriction to $\{1\} \times |J|$ is 
mapped to $\Phi(U_{J,x})$. Since $x$ is contained in at most finitely many $S_j \in \Seul$, $x$ is 
also contained in at most finitely many $S_J$ with $\dim |J|=k+1$. Therefore we can replace every 
$U_{J,x}$ by the finite intersection and hence open neighborhood 
\begin{align*}
U_x=\bigcap_{x \in S_J} U_{J,x} \subset W_x
\end{align*} 
of $x$. We summarize that if the dimension of $|J|$ is $k+1$ and $U_x \cap S_J \not = \emptyset$, 
then $U_x \subset V_J$ and $r_{U_x} \circ g|Q_J$ extends to a map on $[0,1] \times |J|$ with 
values in $\Psi(U_x)$ such that $\{1\} \times |J|$ is mapped to $\Phi(U_x)$. Moreover, simply by shrinking 
$U_x$ more, we may assume that each $U_x$ is contained in some $A_{i+1} \setminus A_{i-1}$ for suitable 
$i \geq n+1$. By Lemma~\ref{cover1} 
there is a locally finite open family $\Aeul$ which refines $\{U_x\colon  x \in X\setminus W\}$, covers $X\setminus W$ 
and such that the order of $\Aeul_{A_{n+1}}$ is at most $\dim{A_{n+2}} +1$. Define 
$$\Ueul\coloneqq \Veul_{A_{n-1}} \cup (\Seul_{A_n}\setminus \Seul_{A_{n-1}}) \cup \Aeul$$
and let us check that $\Ueul$ satisfies all desired properties: $\Ueul$ covers $X$ because 
$\Aeul$ covers $X\setminus W$, and $\Seul_{A_n}$ covers $W$, hence 
$\Veul_{A_{n-1}} \cup (\Seul_{A_n}\setminus \Seul_{A_{n-1}})$ covers $W$ too. By construction we 
have $\Ueul_{A_{n-1}}=\Veul_{A_{n-1}}$. Every element of $\Ueul$ is 
contained in some $A_{i+1}\setminus A_{i-1}$ for suitable $i \in \N$ since this is the case for the elements 
of $\Aeul$, $\Veul$ and $\Seul$. Moreover, since the elements of $\Aeul$ do not meet $A_n$, we have $\ord{\Ueul_{A_n}}\leq \ord{\Veul_{A_n}}$ and
$$\ord(\Ueul_{A_{n+1}})\leq \ord(\Veul_{A_{n}}) + \ord(\Aeul_{A_{n+1}})\leq \ord(\Veul_{A_{n}}) + \dim A_{n+2}+1.$$ 
That $\Ueul$ refines $\Veul$ is clear since the elements of $\Aeul$ refine $\Seul$. 
Let us define the $k+1$-skeleton $h$. We have the indexed cover $\Veul=\{V_j\}_{j \in \Jeul}$ and let us index $\Ueul=\{U_i\}_{i \in \Ieul}$ such that 
$\Ieul \cap \Jeul$ are the indices corresponding to $\Veul_{A_n}$, and such that for $i \in \Ieul \cap \Jeul$ we have $U_i=S_i$ 
if $U_i \in \Seul_{A_n} \setminus \Seul_{A_{n-1}}$ and $U_i=V_i$ if $U_i \in \Veul_{A_{n-1}}$. Let $\{e_i\colon  i \in \Ieul \cup \Jeul\}$ denote linearly independent vectors, 
let $E$ be the real vector space spanned by $\{e_i\colon  i \in \Ieul\}$ and $E'$ the one 
spanned by $\{e_j\colon  j \in \Jeul\}$. We have $N|\Ueul|\subset E$ and $N|\Veul|\subset E'$. 
Pick for each $i \in \Ieul \setminus \Jeul$ an index $j(i) \in \Jeul$ 
such that $U_i\subset S_{j(i)} \subset V_{j(i)}$, and set $j(i)=i$ for $i \in \Ieul \cap \Jeul$.
Moreover let $\lambda\colon  N|\Ueul| \to N|\Veul|$ be the 
restriction of the linear map $E \to E'$ given by $e_i \mapsto e_{j(i)}$. From $U_i \subset V_{j(i)}$ it follows that 
the homotopy of $k$-skeletons

\begin{align*}
h\colon  [0,1] \times N_k|\Ueul| \to \Psi, \ \ h(t,x)\coloneqq g(t,\lambda(x))
\end{align*} 
is well-defined. Moreover $h_0$ is still the sectionally constant complex given by $f$ and $h_1$ has values in $\Phi$. 
The definition of $\lambda$ yields $h|[0,1] \times N|\Ueul_{A_{n-1}}|=g|[0,1] \times N|\Veul_{A_{n-1}}|$. 
We are left to extend $h$ to a homotopy of $k+1$-skeletons such that 
the time-$1$ map has values in $\Phi$.
To do this we are well prepared. Let 
$I \in N|\Ueul|$ be of length $k+1$. By assumption $\ord \Veul_{A_n}$ is smaller or equal to $k$, 
hence $I$ contains an index $i_0$ with $U_{i_0} \in \Aeul$ by the definition of $\Ueul$. 
Set $|J(I)|\coloneqq \lambda(|I|)$ and let $J(I)$ be the corresponding element of the nerve $N(\Veul)$. 
In the case where $\dim |J(I)| \leq k$, this extension exists since 
$U_I\subset V_{J(I)}$ and $g$ is defined on $N_k(\Veul)$ by assumption. Otherwise choose 
$U_x$ with $U_{i_0} \subset U_x$. 
Since $U_i \subset S_{j(i)}$ for all $i \in I$, we have $U_I \subset S_{J(I)}$. 
This implies
$$U_x \cap S_{J(I)} \supset U_I \cap S_{J(I)}=U_I \not = \emptyset,$$ 
hence $U_x \subset V_{J(I)}$ and $r_{U_x} \circ g|Q_{J(I)}$ extends to a continuous map 
$h_I\colon  [0,1] \times |J(I)| \to \Psi(U_x)$ with the desired properties by construction. 
Since $U_I\subset U_x$, we get an extension of $h$ to $[0,1] \times |I|$ by composing the restriction
$$\lambda\colon  [0,1] \times |I| \to [0,1] \times |J(I)|, \ \ (t,x) \mapsto (t,\lambda(x))$$ with $h_I$. 
This finishes the proof.
\end{proof}

\begin{proof}[Proof of Proposition~\ref{comp}]
First construct a suitable $0$-skeleton 
so that we can apply Lemma~\ref{inductive step} (with $n=1$) to that $0$-skeleton. 
As in the proof of Lemma~\ref{inductive step}, this depends on $\Phi \hookrightarrow \Psi$ 
being a local weak homotopy equivalence. In the following 
$\Veul=\Veul_n$ denotes the cover of $X$ corresponding to a homotopy 
of skeletons obtained in the $n$-th step. We repress the subscript $n$ 
to avoid ugly notations. In particular $\Veul$ may change with every step. 
Apply Lemma~\ref{inductive step} (with $n=1$) in a first step inductively $\dim A_3+1$ 
times to get a homotopy of $\dim A_3+1$-skeletons whose restriction 
to $[0,1]\times N|\Veul_{A_2}|$ is already a homotopy of complexes. Then apply 
Lemma~\ref{inductive step} (with $n=2$) in a second step for 
$\dim A_4+1$ more times to obtain a homotopy of $\dim A_3 +\dim A_4 +2$-skeletons which is already 
a homotopy of complexes when restricted to $[0,1]\times N|\Veul_{A_3}|$. In the third such step we get a homotopy of 
$\dim A_3 +\dim A_4 + \dim A_5+3$-skeletons which is a homotopy of complexes on $[0,1] \times N|\Veul_{A_4}|$ and 
the restriction to $[0,1] \times N|\Veul_{A_2}|$ (including the cover $\Veul_{A_2}$) 
has not been changed in this step. In the $n$-th such step we get a homotopy of skeletons 
which is a homotopy of complexes if restricted to $[0,1] \times N|\Veul_{A_{n+1}}|$ and the restriction 
to $[0,1] \times N|\Veul_{A_{n-1}}|$ (and $\Veul_{A_{n-1}}$) has not been changed since the last step. Moreover we get that 
the elements of $\Veul$ are contained in $A_{i+1}\setminus A_{i-1}$ for suitable $i \in \N$. Set $\Ueul_{n-1}\coloneqq \Veul_{A_{n-1}}$ and let $h_{n-1}$ 
be the restriction to $[0,1] \times N|\Veul_{A_{n-1}}|$ of the homotopy obtained in the $n$-th step for $n\geq 3$. We get 
that $\Ueul_n$ covers $A_n$, that $\Ueul_n \subset  \Ueul_{n+1}$, and 
that $h_{n+1}|[0,1] \times N|\Ueul_n|=h_n$ for $n\geq 2$. Moreover we get that 
the elements of $\Ueul_n$ are contained in $A_{i+1}\setminus A_{i-1}$ for suitable $i \in \N$, 
which guarantees that the union $\Ueul$ of all $\Ueul_n$, $n\geq 2$ is locally finite and that $N|\Ueul|$ is equal to the union of all $N|\Ueul_n|$, $n\geq 2$. 
Now 
$$h\colon [0,1] \times N|\Ueul| \to \Psi, \ \ h(t,x)\coloneqq h_n(t,x) \text{ for some $n$ with $x \in N|\Ueul_n|$}$$ 
is the desired homotopy of complexes. This finishes the proof.
\end{proof}

\subsection{Gluing sections} In this subsection we prove 
\begin{proposition}
\label{p2}
Let $\Aeul=(A_n)_{n \in \N}$ be an ordered cover of $X$ and let $\Phi$ be 
a sheaf of topological spaces on $X$. Assume that 
\begin{enumerate}
\item $\Phi$ is complete metric and $\Aeul$ is weakly flexible for $\Phi$, or 
\item $\Aeul$ is ordered flexible for $\Phi$. 
\end{enumerate}
Then any complex $f\colon N|\Aeul| \to \Phi$ is homotopic through a homotopy 
of complexes $f_t\colon N|\Aeul| \to \Phi$ to a sectionally constant complex.
\end{proposition}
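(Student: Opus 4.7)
My plan is to prove a finite-length version by induction on $n$, and then to extend to the full locally finite cover $\Aeul$ by a limit argument. For the finite version I claim: if $(A_1,\ldots,A_n)$ is a weakly (resp.\ ordered) flexible string for $\Phi$, then every complex $f\colon N|(A_1,\ldots,A_n)|\to \Phi$ is homotopic through a homotopy of complexes to a sectionally constant one, which by the sheaf property amounts to a single section on a neighborhood of $A_1\cup\cdots\cup A_n$. The base case $n=1$ is immediate since $N|(A_1)|$ is a single vertex.

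For the inductive step from $n-1$ to $n$, I decompose $N|(A_1,\ldots,A_n)|$ into the subcomplex $K\coloneqq N|(A_1,\ldots,A_{n-1})|$ and the closed star of the vertex $e_n$, which is the cone over the link $L\coloneqq N|(A_1\cap A_n,\ldots,A_{n-1}\cap A_n)|$. First I apply the inductive hypothesis to the sub-string $(A_1,\ldots,A_{n-1})$ to deform $f|K$ to the sectionally constant complex given by a section $a$ on a neighborhood of $A_1\cup\cdots\cup A_{n-1}$; this deformation extends to a homotopy of the full complex via the cone structure of the star of $e_n$. With the star parameterized as $L\times [0,1]/(L\times\{1\})$ and using Lemma~\ref{correspondence}, the remaining star data is a complex over $L$ valued in the parametric sheaf $\Phi_\alpha$ with $\D=[0,1]$, $\alpha(0)=r(a)$ and $\alpha(1)=b\coloneqq f(e_n)$. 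By the recursive definition of a flexible string, $L$ is a flexible string for $\Phi$, and by Lemma~\ref{parametric sheaves} the sheaf $\Phi_\alpha$ inherits the flexibility, so a second application of the inductive hypothesis trivializes the star data to a single path $\gamma$ on the overlap joining $r(a)$ to $b$. The weakly (resp.\ ordered) flexible pair property applied to $(A_1\cup\cdots\cup A_{n-1},A_n)$ then deforms $\gamma$ to a constant, and $a$ and $b$ glue via the sheaf property into a single section on a neighborhood of $A_1\cup\cdots\cup A_n$.

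For the infinite cover I iterate this construction over the prefixes and concatenate the resulting homotopies on subintervals of $[0,1)$. By local finiteness of $\Aeul$ and the covering property, every compact subset of $X$ eventually lies in the interior of some $A_1\cup\cdots\cup A_n$, so every simplex of $N|\Aeul|$ is affected by only finitely many steps. In case (2), the ordered flexibility condition $r_{A^\circ}\circ a_t=r_{A^\circ}\circ a_0$ freezes each section once it has been glued over an interior neighborhood, so the concatenated homotopy extends continuously to $t=1$. In case (1), I would choose the approximation neighborhood in the weakly flexible pair property at the $n$-th step small enough that the restrictions to any fixed compact form a Cauchy sequence with summable tolerance; completeness of the metric on local sections of $\Phi$ then supplies the limiting global section, and hence the sectionally constant complex.

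The main technical obstacle is the recasting of the star-of-$e_n$ data as a parametric-sheaf complex to which the inductive hypothesis cleanly applies: this requires first deforming $f|K$ to the sectionally constant form so that both $\alpha(0)$ and $\alpha(1)$ are bona fide sections on the overlap, and then translating the inductive output back into a homotopy of the original complex on $N|(A_1,\ldots,A_n)|$. In the infinite case, the bookkeeping of shrinking open neighborhoods on which successive sections live, together with the choice of summable tolerances in the weakly flexible case, is the remaining point of technical care.
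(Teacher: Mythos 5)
Your proposal follows essentially the same route as the paper: the finite case is the paper's Lemma~\ref{main modification}, proved by the same double induction (deform the sub-string complex to sectionally constant, recast the star of $e_n$ as a $\Phi_\alpha$-valued complex over the link via Lemma~\ref{correspondence} and Lemma~\ref{parametric sheaves}, then finish with the flexible pair property), and the infinite case is handled by the same concatenation on $[0,1)$ with summable tolerances and completeness (resp.\ eventual constancy in the ordered flexible case). The argument is correct as outlined.
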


The proof is based on work published in~\cite{FP}, 
where the task is done for the sheaf of holomorphic sections of elliptic submersions. 
This was developed in the thesis of Prezelj~\cite{Prezelj thesis}, see also~\cite{Francs book}.
In the following the symbol $\Phi$ denotes always a sheaf of topological spaces on a given space $X$.

\begin{lemma}
\label{extending homotopy}
Let $f\colon N|\Aeul| \to \Phi$ a complex and 
$g_t\colon N|\Beul| \to \Phi$ a homotopy of 
complexes with $g_0=f| N|\Beul|$ for some $\Beul\subset \Aeul$. Then 
there is a homotopy of complexes $f_t\colon N|\Aeul| \to \Phi$ with $f_0=f$ and 
$f_t|N|\Beul|=g_t$.
\end{lemma}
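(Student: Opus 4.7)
The plan is a routine inductive extension over the skeleta of $N(\Aeul)$, using at each stage the classical fact that for every simplex $|I|$ the subset $K_I\coloneqq\{0\}\times|I|\cup[0,1]\times\partial|I|$ is a deformation retract of the cylinder $[0,1]\times|I|$. The sheaf-theoretic side cooperates because $U_I\subset U_{I'}$ whenever $I'\subset I$, so that the continuous restriction maps $r_{U_I}\colon\Phi(U_{I'})\to\Phi(U_I)$ automatically glue the locally defined data.

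I would first choose open neighborhoods $\Ueul$ of $\Aeul$ and $\Veul$ of $\Beul$ witnessing the continuity of $f$ and of $g_t$ respectively, and then, by intersecting index-wise over the indices corresponding to elements of $\Beul$, arrange that both $f$ and $g_t$ are witnessed by the same neighborhood $\Ueul$. With this in place the extension $f_t$ is built by induction on $k\geq 0$ over $N_k|\Aeul|$, keeping as inductive hypothesis that $f_0=f$ on $N_k|\Aeul|$, that $f_t=g_t$ on $N_k|\Beul|$, and that for every $I\in N_k(\Aeul)$ the composition $r_{U_I}\circ f_t|[0,1]\times|I|$ is continuous into $\Phi(U_I)$. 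On the $0$-skeleton the definition is forced: set $f_t(e_i)=g_t(e_i)$ if $A_i\in\Beul$ and $f_t(e_i)=f(e_i)$, constant in $t$, otherwise.

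For the inductive step I take any $(k+1)$-simplex $|I|$ of $N|\Aeul|$. If $|I|$ already lies in $N|\Beul|$ then $f_t\coloneqq g_t$ suffices. Otherwise the inductively given data on $[0,1]\times\partial|I|$ together with $f$ on $\{0\}\times|I|$ combine, after application of $r_{U_I}$ and using $U_I\subset U_{I'}$ for every face $|I'|\subset|I|$, into a continuous map $h_I\colon K_I\to\Phi(U_I)$. Fixing any retraction $\rho_I\colon[0,1]\times|I|\to K_I$, I set $f_t|[0,1]\times|I|\coloneqq h_I\circ\rho_I$, viewed as a map with values in $\Phi_{U_I}$. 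Since $\rho_I$ restricts to the identity on $K_I$, the new definition agrees with the previously defined values on $[0,1]\times\partial|I|$ and on $\{0\}\times|I|$, and distinct $(k+1)$-simplices do not conflict because they overlap only in lower-dimensional faces already treated.

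The only point requiring genuine attention is the bookkeeping: one must verify at each stage that the map produced is a bona fide homotopy of complexes in the sense of the definition, i.e., that $r_{U_I}\circ f_t|[0,1]\times|I|$ remains continuous with values in $\Phi(U_I)$. Working with a single common neighborhood $\Ueul$ and invoking continuity of the restriction morphisms reduces this to a direct check at each inductive step. Passing to the union of the extensions over all $k$ then produces the desired homotopy of complexes $f_t\colon[0,1]\times N|\Aeul|\to\Phi$ with $f_0=f$ and $f_t|N|\Beul|=g_t$.
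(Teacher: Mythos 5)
Your proof is correct, but it takes a genuinely different route from the paper's. The paper does not induct over skeleta: it exploits the fact that every simplex of $N|\Aeul|$ is the join of a face in $N|\Beul|$ and a face in $N|\Aeul\setminus\Beul|$, writes a general point uniquely as $sx+(1-s)y$ with $x\in N|\Beul|$, $y\in N|\Aeul\setminus\Beul|$, and gives a single closed formula: $f_t(sx+(1-s)y)=f((1+t)sx+(1-(1+t)s)y)$ for $(1+t)s\le 1$ and $=g_{(1+t)s-1}(x)$ for $(1+t)s>1$. This is choice-free, and the verification that one gets a homotopy of complexes is immediate because on each simplex $|I\cup J|$ the map is a precomposition of $f$ (resp.\ of $g$) with a continuous self-map of $[0,1]\times|I\cup J|$ (resp.\ a continuous map to $[0,1]\times|I|$), so continuity after $r_{U_{I\cup J}}$ is inherited directly; it also has the extra feature that $f_t=f$ on $N|\Aeul\setminus\Beul|$ for all $t$. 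Your skeletal induction via the retraction of $[0,1]\times|I|$ onto $\{0\}\times|I|\cup[0,1]\times\partial|I|$ is the classical homotopy-extension argument and is more general (it would work for any subcomplex pair, not just one arising from a subfamily $\Beul\subset\Aeul$), at the cost of choices of retractions and the neighborhood bookkeeping you correctly identify; the one point you should make explicit is that your $f_0$ agrees with $f$, and your inductive boundary agreements hold, only after applying restrictions $r_{U_I}$, i.e.\ up to the paper's notion of equivalence of complexes, which is the sense in which the lemma is meant.
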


\begin{proof}
Note that we can write an arbitrary element of $N|\Aeul|$ uniquely as $sx+(1-s)y$, 
where $x \in N|\Beul|$, $y \in N|\Aeul\setminus \Beul|$ and $s \in [0,1]$. Define the 
extension $f_t$ of $g_t$ for $sx+(1-s)y\in N|\Aeul|$ by
\begin{align*}
f_t (sx+(1-s)y)=\begin{cases}
f((1+t)sx +(1-(1+t)s)y)							& \text{if} \ \ \ (1+t)s \leq 1 \\
g_{(1+t)s-1}(x)									& \text{if} \ \ \ (1+t)s > 1.
\end{cases}
\end{align*}
We get $f_0=f$, $f_t (x) = g_t (x) \ \text{and} \ f_t(y)=f(y)$, 
hence $f_t \vert N|\Beul| = g_t$ and $f_t \vert N|\Aeul\setminus \Beul|=f\vert N|\Aeul\setminus \Beul|$. 
For $I\cup J \in N(\Aeul)$ with $I \in N(\Beul)$ and 
$J \in N(\Aeul\setminus \Beul)$ we have $|I\cup J|=\{sx+(1-s)y\colon  s \in [0,1], x\in |I|, y \in |J|\}$. 
On the set $S\subset [0,1] \times |I\cup J|$ given by $(1+t)s\leq1$, $f_t$ is given by the 
composition of a continuous map $S\to  |I\cup J|$ and 
$f$, hence inherits the requested properties from $f$, whereas on 
the set $S'\subset [0,1] \times |I\cup J|$ given by $(1+t)s>1$, $f_t$ is given by the 
composition of a continuous map $S'\to  [0,1] \times |I|$ and 
$g_t$, hence the requested properties follow from those of $g_t$ and the fact that $I\subset I\cup J$. This implies together with 
$f_t (sx+(1-s)y)=f(x)=g_0(x)$ for $(1+t)s=1$ that $f_t$ is indeed a homotopy of complexes.
\end{proof}

\begin{lemma}
\label{main modification}
Let $f\colon  N|\Aeul| \to \Phi$ be a complex over a weakly or ordered flexible string 
$\Aeul=(A_1, A_2, \ldots, A_n)$ for $\Phi$ in $X$, $n\geq 2$, and set 
$\Beul=(A_1, \ldots, A_{n-1})$ and $A=A_1\cup \cdots \cup A_{n-1}$. Then

\begin{enumerate}
\item There is a homotopy of complexes $f_t$ over $\Aeul$ connecting $f_0=f$ to 
a sectionally constant complex $f_1$. 
\item If $f|N|\Beul|$ is sectionally constant, then $f_t$ can be chosen such that $f_t|N|\Beul|$ is sectionally 
constant too; and such that $r_{A^\circ}\circ g_t$ stays in a prescribed neighborhood of $r_{A^\circ} \circ g_0$, where 
$g_t \in \Phi(U)$, $t \in [0,1]$ for some neighborhood $U$ of $A$ denotes the homotopy 
induced by $f_t|N|\Beul|$.
\item If $\Aeul$ is ordered flexible and $f|N|\Beul|$ is sectionally 
constant we can strengthen (2) to $r_{A^\circ} \circ g_t$ being independent of $t$.
\end{enumerate}
\end{lemma}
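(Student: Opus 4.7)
I would induct on $n$, proving (1), (2), (3) simultaneously. Claim (1) follows from (2) applied to a starting point prepared by the inductive case of (1) on $\Beul$: the inductive (1) gives a homotopy over $\Beul$ to a sectionally constant complex, Lemma~\ref{extending homotopy} extends this to a homotopy over $\Aeul$ beginning at $f$, and the just-proved (2) at length $n$ finishes. The base case $n=2$ is direct: $\Beul=(A_1)$ makes the $\Beul$-restriction tautologically sectionally constant, and a complex over $(A_1,A_2)$ is precisely the data $(a,b,c_s)$ of Definition~\ref{weakly flexible} with $\D=\{*\}$. The conclusion of weak (resp.\ ordered) flexibility of $(A_1,A_2)$ then supplies the desired homotopy: $c_{s,1}$ independent of $s$ forces $a_1,b_1$ to glue to a section inducing a sectionally constant complex at $t=1$, while clause (4) of Definition~\ref{weakly flexible} (strengthened to equality in the ordered case) provides the required control of $g_t$ on $A_1^\circ$.

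For the inductive step $n\geq 3$ I focus on (2), with (3) treated in parallel. Suppose $f|N|\Beul|$ is sectionally constant, represented by a single section $g\in\Phi(U)$ on a neighborhood $U$ of $A$, and set $b=f(e_n)\in\Phi(V)$ on a neighborhood $V$ of $A_n$. Parameterising each simplex $|I\cup\{n\}|$ (for $I\in N(\Beul)$) as $\{(1-s)x+s\,e_n : x\in|I|,\,s\in[0,1]\}$ exhibits the cone part of $f$ as a homotopy $c_s$ of complexes over the shorter string $\Ceul=(A_1\cap A_n,\ldots,A_{n-1}\cap A_n)$, which is weakly/ordered flexible by the recursive definition, running from the sectionally constant complex induced by $g$ (at $s=0$) to that induced by $b$ (at $s=1$). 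Shrinking to a neighborhood $X'\subset U\cap V$ of $A\cap A_n$ and letting $\alpha\colon\{0,1\}\to\Phi(X')$ send $0\mapsto g|_{X'}$ and $1\mapsto b|_{X'}$, I reinterpret $c_s$ as a single complex over $\Ceul$ with values in the parametric sheaf $\Phi_\alpha$ on $X'$. By Lemma~\ref{parametric sheaves}, $\Phi_\alpha$ inherits completeness of the metric and $\Ceul$ keeps its weak/ordered flexibility for $\Phi_\alpha$, so the inductive case of (1) produces a homotopy of such complexes ending at a sectionally constant one, i.e.\ at a path $h_s$ with $h_0=g$, $h_1=b$ on a neighborhood of $A\cap A_n$. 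Since $\Phi_\alpha$ pins its $s=0,1$ endpoints, the sections $g,b$, and hence $f|N|\Beul|$, remain untouched throughout this step.

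The complex is now encoded by the triple $(g,b,h_s)\in\Phi(U'',V'')$ in the notation preceding Lemma~\ref{weakly flexible pair 2}. Applying weak (resp.\ ordered) flexibility of the pair $(A,A_n)=(A_1\cup\cdots\cup A_{n-1},A_n)$ through Lemma~\ref{weakly flexible pair 2} with $\D=\{*\}$ produces a homotopy landing in $\Phi(U'''\cup V''')$, i.e.\ a section on a neighborhood of $A_1\cup\cdots\cup A_n$, yielding the required sectionally constant target; the approximation (equality in the ordered case) on $r_{A^\circ}\circ p$ built into Lemma~\ref{weakly flexible pair 2} is precisely the condition on $g_t|_{A^\circ}$ demanded by (2) (resp.\ (3)), since the previous step did not move $g$ at all. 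The main obstacle I anticipate is the bookkeeping around the parametric sheaf $\Phi_\alpha$: faithfully identifying the cone data as a complex for $\Phi_\alpha$, arranging neighborhoods so that sheaves restrict compatibly, and verifying that neither the sectionally constant restriction on $N|\Beul|$ nor (when targeting (3)) the trace of $g_t$ on $A^\circ$ is disturbed by the cone straightening.
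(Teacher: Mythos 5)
Your proposal is correct and follows essentially the same route as the paper: induction on $n$ with the base case read off from the definition of a weakly/ordered flexible pair, first flattening $f|N|\Beul|$ via the inductive hypothesis and Lemma~\ref{extending homotopy}, then straightening the cone over $\Ceul$ by passing to the parametric sheaf $\Phi_\alpha$ via Lemma~\ref{parametric sheaves}, and finally invoking flexibility of the pair $(A_1\cup\cdots\cup A_{n-1},A_n)$, with the approximation/interpolation claims coming from that last step since the earlier steps leave the $\Beul$-part fixed. The only difference is organizational (you isolate (2) as the inductive workhorse and derive (1) from it, while the paper runs the same steps under the heading of (1) and notes (2)--(3) at the end), which is not a substantive departure.
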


\begin{proof}
We proceed by induction on $n$. For $n=2$ the statement is trivial since 
$(A_1, A_2)$ is a weakly (resp.~ordered) flexible pair by assumption. Suppose statement (1)
is true for some $n$ with $n-1 \geq 2$ and let $f\colon N|\Aeul| \to \Phi$ be a complex 
over $\Aeul$. Since $\Beul=(A_1, \ldots, A_{n-1})$ is a weakly flexible string of length 
$n-1$ for $\Phi$, we find by the inductive assumption a homotopy of complexes 
$\tilde f_t\colon  N|\Beul| \to \Phi$ connecting $\tilde f_0=f|N|\Beul|$ to a sectionally constant complex $\tilde f_1$. 
By Lemma~\ref{extending homotopy} 
there is a homotopy of complexes $f_t$ that extends $\tilde f_t$ to a homotopy of complexes over $\Aeul$ such that $f_0=f$, 
and $f_1|N|\Beul|=\tilde f_1$ is sectionally constant. 
Set $\Ceul=(A_1\cap A_n, \ldots, A_{n-1} \cap A_n)$, define $\lambda\colon [0,1] \times N|\Ceul| \to N|\Aeul|$ by $\lambda(s,e)=(1-s)e+se_n$ 
and note that $\lambda$ maps $[0,1] \times |I|$ for $I \in N(\Ceul)$ to $|I\cup \{n\}|$. Therefore $f \circ \lambda$ is a homotopy of complexes 
over $\Ceul$. Moreover $f \circ \lambda$ is sectionally constant when restricted to $s=\{0,1\}$. In particular we may view 
$f \circ \lambda$ as a complex over $\Ceul$ for the sheaf $\Phi_\alpha$, where $\alpha\colon \{0,1\} \to \Phi(V)$ is the map 
induced by $f\circ \lambda$ restricted to the set given by $s\in \{0,1\}$ for a 
sufficiently small neighborhood $V$ of the union of the elements of $\Ceul$. 
Since $\Ceul$ is weakly (resp.~ordered) flexible for $\Phi$, $\Ceul$ is likewise 
weakly (resp.~ordered) flexible for $\Phi_\alpha$ (see Lemma~\ref{parametric sheaves}). This implies by 
our inductive assumption that there is a homotopy $h_t$ connecting the 
complex given by $f \circ \lambda\colon  N|\Ceul| \to \Phi_\alpha $ to a sectionally constant one. 
Let us consider the homotopy $h_t\colon  N|\Ceul| \to \Phi_\alpha$, $t \in [0,1]$ as a 
homotopy $h_t\colon  [0,1] \times N|\Ceul| \to \Phi, \ (t,s,e') \mapsto h_t(s,e')$, 
which is sectionally constant and independent of $t$ for $s \in \{0,1\}$ and set
\begin{align*}
f_t(e)=
\begin{cases}
h_t\circ \lambda^{-1} (e),   	&\textit{ if } e\in \lambda((0,1) \times N|\Ceul|)=N|\Aeul|\setminus (N|\Beul| \cup \{e_n\}), \\
f(e), 						&\text{ otherwise. }
\end{cases}
\end{align*} 
Note that $f_0=f$. Moreover $f_t$ defines a homotopy of complexes over $\Aeul$ 
since $h_t$ is independent of $t\in [0,1]$ 
when restricted to $\{0,1\} \times N|\Ceul|$. We have $f_t|N|\Beul|=f|N|\Beul|$ by 
definition, and since $h_1$ is sectionally constant, $f_1$ yields a complex $f'$ 
over the pair $(A_1\cup \cdots \cup A_{n-1}, A_n)$. 
We are now in the situation of the inductive start and 
get a homotopy $f'_t\colon [0,1] \to \Phi$, $t \in [0,1]$ which connects $f'$ to a sectionally constant complex 
$f'_1$ and satisfies the approximation property $(2)$ in the weakly flexible 
case and the interpolation property $(3)$ in the ordered flexible case. 
The homotopy $f'_t$ yields the desired homotopy of complexes $f_t$, which can be seen 
explicitly by setting $f_t((1-s)e+se_n)\coloneqq f'_t(s)$ for $(1-s)e+se_n\in N|\Aeul|$. This finishes 
the proof of $(1)$. For the proof of $(2)$ and $(3)$ note that $h_t|N|\Beul|$ was independent of 
$t$ and that the homotopy of complexes $f_t$ from the last step satisfies the approximation resp.~interpolation 
property in question since $f'_t$ does. This finishes the proof.
\end{proof}

\begin{lemma}
\label{exhaustion}
Let $\Aeul=(A_n)_{n \in \N}$ be a countable ordered locally finite cover of $X$ by compacts. 
Then there is a subsequence $n_i$, $i \in \N$ of $\N$ such 
that $K_i\coloneqq A_1 \cup \cdots \cup A_{n_i}$ defines an exhaustion 
$K_1 \subset  K_2 \subset  K_3 \subset  \cdots$ of $X$.
\end{lemma}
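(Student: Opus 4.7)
The plan is to choose the indices $n_i$ recursively, exploiting local finiteness near each compact partial union $K_i$. I will start with $n_1 = 1$, so $K_1 = A_1$ is compact. Suppose inductively that $n_1 < n_2 < \cdots < n_i$ have been chosen such that every $K_j$ is compact and $K_{j} \subset K_{j+1}^\circ$ for $j < i$. The set $K_i$ is a finite union of compacts and hence compact, and local finiteness of $\Aeul$ says every point of $K_i$ admits an open neighborhood meeting only finitely many $A_n$. Extracting a finite subcover of $K_i$ from these neighborhoods and taking its union yields an open neighborhood $M_i$ of $K_i$ which itself meets only finitely many $A_n$. Since $\Aeul$ covers $X$, each $x \in M_i$ lies in some $A_n$, and this $A_n$ must then be among the finitely many cover elements meeting $M_i$; letting $N_i$ denote the largest index $n$ with $A_n \cap M_i \neq \emptyset$ I obtain $M_i \subset A_1 \cup \cdots \cup A_{N_i}$.

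I then set $n_{i+1} \coloneqq \max\{n_i + 1,\, N_i\}$. With this choice $K_i \subset M_i \subset K_{i+1}$, so $K_i$ sits in the interior of $K_{i+1}$, which in particular gives the nesting $K_i \subset K_{i+1}$ required for an exhaustion. The strict monotonicity $n_i < n_{i+1}$ forces $n_i \to \infty$, so every $A_n$ is eventually contained in some $K_j$, and therefore $\bigcup_i K_i = \bigcup_n A_n = X$. This produces the desired sequence.

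The argument is essentially routine and I do not anticipate any real obstacle. The one point that matters is the observation that an open neighborhood of a compact set meeting only finitely many members of a cover is automatically contained in the union of those finitely many members; this is exactly where the hypothesis that $\Aeul$ covers $X$ enters, and it is what lets the recursion advance at each step.
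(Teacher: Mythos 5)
Your proposal is correct and follows essentially the same route as the paper: both construct, around each compact partial union, an open neighborhood meeting only finitely many cover elements and observe that, since $\Aeul$ covers $X$, this neighborhood is contained in the union of those finitely many elements, which drives the recursive choice of indices. The only difference is that you spell out the recursion explicitly, whereas the paper compresses it into the single claim that each finite union admits such a neighborhood.
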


\begin{proof}
It suffices to show that for $n \in \N$ the union
$K\coloneqq A_1 \cup \cdots \cup A_n$ admits an open neighborhood $U\supset K$ 
which is contained in a finite union of elements of $\Aeul$. Pick for each $p \in K$ a neighborhood 
$U_p$ of $p$ which meets at most finitely many elements of $\Aeul$ 
and then a finite subfamily $\{U_1, \ldots, U_l\}\subset \{U_p\colon  p \in K\}$ which covers $K$. 
The union $U\coloneqq U_1 \cup \cdots \cup U_l$ is an open neighborhood of $K$ which meets at most 
finitely many elements of $\Aeul$. Since $\Aeul$ covers $X$ the finite subfamily $\Aeul_U$ of elements which meet $U$ covers $U$. 
This finishes the proof.
\end{proof}

\begin{proof}[Proof of Proposition~\ref{p2}]
We first consider assumption (1), that is the case where $\Aeul$ is weakly flexible. Let $\epsilon>0$, set 
$\Aeul_n=(A_1, A_2, \ldots, A_n)$ and $U_n=(A_1\cup A_2 \cup \cdots \cup A_n)^\circ$ and denote the metric 
on $\Phi(U_n)$ by $d_{U_n}$. Applying Lemma~\ref{main modification} (2) and Lemma~\ref{extending homotopy} 
inductively ensures the existence of homotopies of complexes $f^n_t $, $n\in \N$ satisfying (i) $f^1_0=f$, $f^{n+1}_0=f^n_1$, 
(ii) $f^n_t|N|\Aeul_n|$ is sectionally constant, and (iii) the homotopy of maps $g^n_t$ induced by $f^n_t|N|\Aeul_n|$ defined 
on a neighborhood of $A_1 \cup \cdots \cup A_n$ satisfies 
$d_{U_m}(r_{U_m} \circ g^n_s, r_{U_m} \circ g^n_t) < \epsilon/2^n$ for all 
$s,t \in [0,1]$, $m\leq n$ ((iii) can be achieved since the restrictions of $\Phi$ are continuous and 
since there are only finitely many $m\leq n$ if $n \in \N$ is fixed). Moreover, by Lemma~\ref{exhaustion} there is for every $n \in \N$ 
some $m>n$ with $A_n\subset U_m$. Since $f^k||N|\Aeul_m|$ is sectionally constant for $k\geq m$ we may assume that the 
open neighborhood $\Ueul^k=(U_n^k)_{n \in \N}$ of $\Aeul=(A_n)_{n \in \N}$ corresponding to $f^k_t$ is such that $U_n^k \supset U_m$ for 
$k\geq m$. Write the half open unit interval as
\begin{align*}
\bigcup_{n \in \N} [t_n, t_{n+1}] =[0,1), \ \ \text{where} \ \ t_1 \coloneqq 0 \ \ \text{and} \ \ t_{n+1}\coloneqq \sum_{i=1}^n 1/2^{i} \ \ \text{for} \ \ n>0
\end{align*}
and rescale the homotopy parameter of $f^n_t$ to $[t_n,t_{n+1}]$. 
Let us denote the composition of the homotopies $f^1_t, f^2_t, f^3_t, \ldots$ 
with these rescaled parameters by $f_t$. Since $A_n \subset U_m \subset U_n^k$ for 
sufficiently large $m>n$ and all $k\geq m$, the intersection $\cap_{k \in \N} U_n^k$ 
contains an open neighborhood of $A_n$. This guarantees that the infinite composition $f_t$ of homotopies of 
complexes is again a homotopy of complexes with parameter $t \in [0,1)$.
We extend $f_t$ to $t=1$ by passing to the limit. 
For a fixed $I\in N(\Aeul)$ let $m$ be such that $A_I \subset U_m$, hence $r_{U_m} \circ f_t||I|$ 
is well defined and continuous for $t\geq t_m$; and set 
$f_1(e)=\lim_{t\to 1} r_{U_m}\circ f_t(e)$ for $e \in |I|$. This limit exists by (iii) and 
the fact that $\Phi(U_m)$ is a complete metric space by assumption. Moreover, by (iii) we have 
uniform convergence of $r_{U_m}\circ f_t||I|$ to $f_1||I|$, and hence 
$f_1||I|$ is a continuous extension of $r_{U_m}\circ f_t||I|$, $t\geq t_m$ 
to $[t_m,1]\times |I|$. $f_1$ is sectionally constant since $f_t|N|\Aeul_n|$ 
is sectionally constant for $t\geq t_n$ and all $n\in \N$. In particular $f_t$ is a 
homotopy of complexes connecting $f=f_0$ to a sectionally constant complex $f_1$. 
The case where $\Aeul$ is ordered flexible is much easier since instead of (iii) we can achieve 
the strengthening (iii'): The homotopy of maps $g^n_t$ induced by $f^n_t|N|\Aeul_n|$ defined on a neighborhood 
of $A_1 \cup \cdots \cup A_n$ satisfies $r_{U_m} \circ g^n_s= r_{U_m} \circ g^n_t$ 
for all $s,t \in [0,1]$, $m\leq n$. As an effect of this strengthening, passing to the limit 
does not require the sets of local sections to be complete metric since the 
homotopy is independent of $t$ when restricted to a fixed compact and sufficiently big $t<1$. 
This finishes the proof.
\end{proof}

\subsection{The proofs of Theorem~\ref{t1} and \ref{t3}} 
\begin{proof}[Proof of Theorem~\ref{t3}]
Let $\Phi \hookrightarrow \Psi$ be the inclusion of sheaves on $X$ 
corresponding to the assumptions of Theorem~\ref{t3}.
First we prove a non-parametric version. For given $f \in \Psi(X)$ 
Proposition~\ref{comp} yields a homotopy of complexes $g_t$, $t\in [0,1/2]$ over an open cover $\Ueul$ with values in 
$\Psi$ such that $g_0$ is the sectionally constant complex given by $f$ and $g_{1/2}$ has values in $\Phi$. 
By Remark~\ref{subcover}, since $\Phi$ is either weakly flexible or ordered flexible, we may exchange the cover $\Ueul$ corresponding to $g_t$ 
by a weakly flexible resp.~ordered flexible cover for $\Phi$. Now Proposition~\ref{p2} yields a homotopy of complexes with values in $\Phi$ connecting $g_{1/2}$ to 
a sectionally constant complex $g_1$. This shows in particular 
that $\Phi(X)\not = \emptyset$ if $\Psi(X)\not = \emptyset$ under the weaker assumptions stated in Theorem~\ref{t3}.
Including the assumption that $\Psi$ is likewise complete metric weakly flexible (or ordered flexible), note that 
the $\Psi$-valued homotopy of complexes $g_t$ is 
a sectionally constant complex when restricted to $t=0$ and $t=1$, hence defines a map 
$$\beta\colon   \{0,1\} \to \Psi(X)$$ with $\beta(0)=f$ given by $g_0$ and $\beta(1) \in \Phi(X)$ given by $g_1$. In particular 
$g_t$ yields a complex $g$ with values in $\Psi_\beta$. Since $\Psi_\beta$ is complete metric weakly flexible (resp.~ordered flexible) if 
$\Psi$ is (see Lemma~\ref{parametric sheaves}), we may assume that the cover corresponding to $g$ 
is weakly flexible (resp.~ordered flexible) for $\Psi_\beta$ by Lemma~\ref{faithful neighborhood} and Remark~\ref{subcover}.
Now Proposition~\ref{p2} yields a global section $h \in \Psi_\beta(X)$, which is a path with values in $\Psi(X)$ connecting $f$ to an element of $\Phi(X)$. 
This finishes the proof of the non-parametric version.
To pass to the parametric version, note that 
$\Phi(X) \hookrightarrow \Psi(X)$ 
is a weak homotopy equivalence if and only if for every $\alpha\colon \partial \D \to \Phi(X)$ and 
every $f \in \Psi_\alpha(X)$ there is a path in $\Psi_\alpha(X)$ connecting $f$ to an element of $\Phi_\alpha(X)$. 
Therefore Lemma~\ref{parametric sheaves} reduces the proof to the proved non-parametric version. This finishes the proof. 
\end{proof} \noindent
To prove Theorem~\ref{t1} let us define $\cC$-strings and $\cC$-covers in terms of 
$\cC$-pairs analogous to how we defined weakly flexible strings 
and weakly flexible covers in terms of weakly flexible pairs. That is

\begin{definition}
\label{C-cover}
Let $X$ be a complex space. A \textit{$\cC$-string} of length $n\geq 2$ is recursively 
defined as a finite sequence $(A_1, A_2,A_3, \ldots, A_n)$ of subsets of $X$ such that 
\begin{enumerate}
\item $(A_1 \cup \cdots \cup A_{n-1} , A_n)$ is a $\cC$-pair, and if $n\geq 3$, then
\item $(A_1, \ldots, A_{n-1})$ and $(A_1\cap A_n, \ldots , A_{n-1}\cap A_n)$ are $\cC$-strings. 
\end{enumerate}
A \textit{$\cC$-cover} is a locally finite cover $(A_n)_{n \in \N}$ of $X$ 
such that for every $n\in \N$  $(A_1, A_2, \ldots, A_n)$ is a $\cC$-string. 
\end{definition} \noindent
The crucial fact is that every Stein space admits arbitrarily fine $\cC$-covers, a 
result which relies on \textit{Grauert's bump method}. 
In the proof of an Oka principle this technique was initially applied by Henkin and Leiterer in \cite{Henkin und Leiterer}. 
A good and modern formulation of the required tool, 
which includes the case where $X$ is singular, is formulated 
in \cite{Francs book}, p.~294, a reference for 

\begin{proposition} 
\label{Cover}
Let $X$ be a second countable reduced Stein space and $\Ueul$ an open cover of $X$. 
Then there is a $\cC$-cover $(A_n)_{n \in \N}$ which refines $\Ueul$. 
\end{proposition}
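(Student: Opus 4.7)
The plan is to construct the cover inductively by a Grauert-type bump procedure. First I would exhaust $X$ by a sequence $K_1 \subset K_2^\circ \subset K_2 \subset K_3^\circ \subset \cdots$ of strictly pseudoconvex $\cO(X)$-convex Stein compacts; this is standard for reduced Stein spaces and, since $X$ is second countable, such an exhaustion exists. The strategy is then to pass from $K_m$ to $K_{m+1}$ by adding finitely many small ``bumps,'' each a Stein compact contained in a single element of $\Ueul$ and in a local holomorphic chart, one at a time. Concatenating these finite sequences over $m$ produces the required $(A_n)_{n\in\N}$; local finiteness follows automatically from the fact that each $A_n$ lies in $K_{m+1}\setminus K_{m-1}^\circ$ for suitable $m$, so only finitely many $A_n$ meet any $K_j$.

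The essential content is to verify at each step of the construction that when a new bump $B$ is added to the current union $U = A_1\cup\cdots\cup A_k$, the pair $(U,B)$ is a $\cC$-pair. The local model, as formulated in Forstneri\v c's book p.~294 for singular bases, is precisely designed to yield this: $U \cup B$ remains a strictly pseudoconvex (hence Stein) compact, $U\cap B$ is a small strictly convex piece inside a chart (so Stein and $\cO(B)$-convex), and the geometric attachment is transversal in the sense that the ``new'' boundary of $B$ stays at positive distance from the ``old'' boundary of $U$, giving condition~(3) of Definition~\ref{C-pair}. The refinement of $\Ueul$ is achieved simply by taking each bump small enough to fit into a prescribed $U_i\in\Ueul$, using the local nature of charts.

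The main obstacle, and the reason a naive bumping does not suffice, is the recursive clause in Definition~\ref{C-cover}: one needs $(A_1\cap A_n,\ldots,A_{n-1}\cap A_n)$ to be itself a $\cC$-string, which by induction imposes nested convexity and separation requirements on all the overlap patterns. This is handled by the bump method by keeping the bumps so small (in a fixed chart around the new piece) that intersecting everything with $A_n$ produces a configuration of strictly convex pieces inside a single coordinate patch. In that chart the intersections reduce to an elementary convex-geometric picture where $\cO$-convexity and the separation condition can be read off directly, and the $\cC$-string property is inherited along the recursion. The hard step is arranging this compatibility simultaneously with local finiteness and with the refinement condition, which is exactly the content of the bump construction carried out in~\cite{Francs book}; given that machinery, the proposition follows by assembling the finite families produced at each level $m$ into a single locally finite sequence.
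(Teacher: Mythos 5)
Your proposal is correct and follows essentially the same route as the paper, which itself offers no independent proof but refers to Grauert's bump method as formulated in~\cite{Francs book}, p.~294: exhaust by strictly pseudoconvex Stein compacts, attach small convex bumps in charts one at a time, and read off the $\cC$-pair and recursive $\cC$-string conditions from the local convex model. Your identification of the recursive clause in Definition~\ref{C-cover} as the delicate point, and your deferral of the analytic details to the cited bump construction, match the paper's treatment.
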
 
\noindent
For Stein spaces Proposition~\ref{Cover}  implies the following 
sufficient assumption for weak flexibility.

\begin{lemma}
\label{weak}
Let $\Phi$ be a sheaf of topological spaces on a second countable reduced Stein space $X$. 
Assume every point $p \in X$ has a neighborhood $U$ such that every 
$\cC$-pair $(A,B)$ with $B\subset U$ is weakly flexible for $\Phi$. Then $\Phi$ 
is weakly flexible. The analogous statement holds if weak flexibility is replaced by ordered flexibility 
in the assumption and the conclusion.
\end{lemma}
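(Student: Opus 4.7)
The plan is to combine the hypothesis, which gives local weak flexibility for $\cC$-pairs, with Proposition~\ref{Cover}, which supplies arbitrarily fine $\cC$-covers on second countable reduced Stein spaces. Given an open cover $\Veul$ of $X$, I first refine it so that each element of the refinement is contained in one of the hypothesised neighborhoods. Concretely, for each $p\in X$ choose a neighborhood $U_p$ witnessing the hypothesis, and form the common open refinement $\Weul=\{V\cap U_p:V\in\Veul,\ p\in X\}$ of $\Veul$ and $\{U_p\}_{p\in X}$. Then I invoke Proposition~\ref{Cover} to obtain a $\cC$-cover $\Aeul=(A_n)_{n\in\N}$ refining $\Weul$, hence refining $\Veul$, and with the additional feature that every $A_n$ sits inside some $U_{p(n)}$.

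The heart of the argument is then a purely formal induction showing that every initial segment $(A_1,\ldots,A_n)$ of $\Aeul$, which is a $\cC$-string by definition of a $\cC$-cover, is in fact a weakly flexible string. To make the induction go through I would prove the slightly stronger statement: any $\cC$-string $(B_1,\ldots,B_n)$ all of whose members are contained in some $U_{p_i}$ is a weakly flexible string for $\Phi$. The base $n=2$ is immediate from the hypothesis, since $(B_1,B_2)$ is a $\cC$-pair with $B_2\subset U_{p_2}$. For the inductive step, the $\cC$-pair $(B_1\cup\cdots\cup B_{n-1},B_n)$ is weakly flexible by the hypothesis applied to $B_n\subset U_{p_n}$, and the two shorter $\cC$-strings $(B_1,\ldots,B_{n-1})$ and $(B_1\cap B_n,\ldots,B_{n-1}\cap B_n)$ from the recursive definition both satisfy the hypothesis of the induction: the former since each $B_i\subset U_{p_i}$, and the latter -- this is the small observation that makes the induction clean -- because every $B_i\cap B_n$ is contained in $U_{p_n}$. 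By the inductive hypothesis both are weakly flexible strings, and combining this with weak flexibility of the top pair gives that $(B_1,\ldots,B_n)$ is a weakly flexible string.

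Since each initial segment $(A_1,\ldots,A_n)$ of $\Aeul$ then satisfies the criterion, $\Aeul$ is a weakly flexible cover refining $\Veul$. As $\Veul$ was an arbitrary open cover, $\Phi$ is weakly flexible. The ordered flexible case needs no new idea: replace every occurrence of \emph{weakly flexible} by \emph{ordered flexible} in both the hypothesis and the induction; the recursive definition of a $\cC$-string is unchanged, and every step of the argument carries over verbatim. There is no genuine obstacle here; the only thing to be careful about is the book-keeping in the recursive step to ensure that the second sub-string $(B_1\cap B_n,\ldots,B_{n-1}\cap B_n)$ still falls under the hypothesis, which is immediate once one notices that its members are all contained in $U_{p_n}$.
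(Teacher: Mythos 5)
Your proposal is correct and follows essentially the same route as the paper: refine the given cover by the hypothesised neighborhoods $\{U_p\}$, apply Proposition~\ref{Cover} to get a $\cC$-cover, and then observe that every $\cC$-pair produced by the recursive definition has its second component inside some $U_p$, so the recursion for $\cC$-strings transcribes verbatim into the recursion for weakly (resp.\ ordered) flexible strings. Your explicit induction merely spells out what the paper states as ``the two facts from which the result follows immediately.''
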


\begin{proof}
Let $\Ueul$ be an open cover of $X$. Pick for every point $p \in X$ a neighborhood $U_p$ such that each $\cC$-pair 
$(A,B)$ with $B\subset U_p$ is weakly flexible. By Proposition~\ref{Cover} there is a $\cC$-cover 
$\Aeul=(A_n)_{n \in \N}$ which refines $\{U_p \cap U\colon  p \in X, U \in \Ueul\}$. Clearly $\Aeul$ refines $\Ueul$, hence we are left to 
show that $\Aeul$ is a weakly flexible cover for $\Phi$. Note that every $\cC$-pair $(A,B)$ emerging from $\Aeul$ by applying the recursion 
in Definition~\ref{C-cover} satisfies $B\subset A_n \subset U_p$ for suitable $n \in \N$ and $p \in X$ and is therefore a weakly flexible pair by 
assumption. Moreover the recursions in the definitions of $\cC$-covers and weakly flexible covers are the same up to replacing 
every occurence of $\cC$ by \textit{weakly flexible}. From these two facts the result follows immediately.
\end{proof}

\begin{proof}[Proof of Theorem~\ref{t1}]
Sheaves satisfying assumption (1) resp.~(2) in Theorem~\ref{t1} are weakly resp.~ordered flexible by Lemma~\ref{weak}. 
Theorem~\ref{t1} is therefore a special case of Theorem~\ref{t3}.
\end{proof}

\appendix

\section{Applying Theorem~\ref{t1} in Oka theory}
In this appendix we give references for the proofs of the assumptions of Theorem~\ref{t1} in the settings of the Oka principles cited in the introduction. 
The intention is to give the reader a hint where the analytic challenges providing the assumptions of Theorem~\ref{t1} are tackled in the original work. 
In some cases the cited work needs some adjustments, which will be pointed out. These adjustments were part of the author's thesis~\cite{Diss}, but cost 
too many lines to be included here. 

\subsection{Weak flexibility}
The proof of the weak flexibility of a $\cC$-pair $(A,B)$ with respect to a given complex analytic sheaf is usually proved in two steps:
\begin{enumerate}
\item a parametric Runge approximation property, and  
\item a gluing property. 
\end{enumerate}
\vspace{2mm} \noindent
\textit{The Oka principle for elliptic submersions:} To show the weak flexibility assumptions of Theorem~\ref{t1} in this setting 
one has to show that if $h\colon Z \to X$ is 
a holomorphic submersion onto a reduced Stein space $X$ and $U\subset X$ is an open set such that the restriction 
$h\colon h^{-1}(U) \to U$ admits a dominating spray, then every $\cC$-pair $(A,B)$ with $B\subset U$ is weakly flexible for 
the sheaf of holomorphic sections of $h$. This was discovered by Gromov~\cite{elliptic bundles}.
Detailed proofs of Gromov's insight have been given by 
Forstneri\v c and Prezelj (see e.g.~\cite{FP}). A convenient source is~\cite{Francs book}: The weak flexibility of 
the pair $(A,B)$ follows from the Runge approximation property stated in Theorem 6.2.2, p.~284 and the gluing property stated in Proposition 6.7.2, p.~288. 
\vspace{2mm} \\
\textit{The Oka principle for principal $G$-bundles:} This Oka principle is a special case of the Oka principle for elliptic submersions. However, since 
the two remaining Oka principles build strongly on Cartan's exposition of Grauert's work~\cite{Cartan}, it makes sense to give references. The required 
Runge approximation property and gluing property which yield the weak flexibility of $\cC$-pairs follow in this case from 
a Runge approximation property and a splitting lemma in an associated sheaf of groups. These two key results are in Cartan's exposition of 
Grauert's work Proposition 1 and 2 (see~\cite{Cartan}, p.~109).
\vspace{2mm} \\ 
\textit{The Oka principle for admissible pairs of sheaves:} This Oka principle builds on Cartan's exposition of Grauert's work. The necessary extensions 
of Cartan's Proposition 1 and 2 from~\cite{Cartan} are Lemma 2 and 3 from Forster and Ramspott's work (see~\cite{Forster und Ramspott}, p.~271 and p.~273).
\vspace{2mm} \\ 
\textit{The Oka principle for equivariant isomorphisms:} This Oka principle builds likewise on Cartan's text~\cite{Cartan}. The necessary extensions of 
Cartan's Proposition 1 and 2 in the work of Kutzschebauch, L\'arusson and Schwarz are Proposition 10.2 and 10.3 (see~\cite{KLS}, p.~7293).
\vspace{2mm} \\
The key results from the last three Oka principles, i.e.\ from those Oka principles which build on Cartan's exposition of Grauert's work, need some adjustments 
to yield complete proofs of the weak flexibility of $\cC$-pairs. These adjustments can be found in~\cite{Diss}, Chapter 5 and 6.

\subsection{Local weak homotopy equivalences} The difficulty of the proof that a given inclusion of sheaves $\Phi \hookrightarrow \Psi$ 
is a local weak homotopy equivalence depends strongly on the setting. 
\vspace{2mm} \\ \textit{The Oka principle for elliptic submersions:} In this setting 
it suffices to show that if 
$h\colon Z \to X$ is a holomorphic submersion onto a reduced complex space and $\Phi \hookrightarrow \Psi$ is 
the inclusion of the sheaf of holomorphic sections to the sheaf of continuous sections of $h$, then $\Phi \hookrightarrow \Psi$ is 
a local weak homotopy equivalence. Gromov seems to have taken this result for granted in~\cite{elliptic bundles}. 
In the more detailed work~\cite{FP} local weak homotopy equivalences are not introduced. Instead, an analogue of our Proposition~\ref{comp} 
is stated in the special case of holomorphic submersions, namely Proposition 4.7. The validity of Proposition 4.7 in~\cite{FP} has been carefully checked in 
the thesis of Jasna Prezelj~\cite{Prezelj thesis}, which yields implicitly a proof of the fact that $\Phi \hookrightarrow \Psi$ is a local weak homotopy equivalence in the mentioned case. 
\vspace{2mm} 
\\ \textit{The Oka principle for principal $G$-bundles:} This is a special case of the above.\vspace{2mm} \\
\textit{The Oka principle for admissible pairs of sheaves:} In the work of 
Forster and Ramspott~\cite{Forster und Ramspott} there is a slight weakening of $\Phi \hookrightarrow \Psi$ being a local weak homotopy equivalence 
in the assumption, namely the homotopy property (PH) from Satz 1, p.~267. 
Using (PH), the fact that an inclusion of admissible pairs of sheaves $\Phi \hookrightarrow \Psi$ 
in the sense of Forster and Ramspott is a local weak homotopy equivalence 
is a corollary to Lemma 1, p.~269 in~\cite{Forster und Ramspott}. 
\vspace{2mm} \\ 
\textit{The Oka principle for equivariant isomorphisms:} In this setting 
it is hard to show that the given inclusion $\Phi \hookrightarrow \Psi$ is a local weak homotopy equivalence. 
Theorem 1.3, p.~7253 in~\cite{KLS} reduces the proof to the case where we have $X=Y$ for given Stein $G$-manifolds $X$ and $Y$, where $G$ is a 
complex reductive Lie group. Having this, one needs to extend some Lemmata from~\cite{KLS} in Section 3 and 5 to analogous parametric versions. 
These necessary adaptations are simple once Section 3 and 5 in~\cite{KLS} are understood. \vspace{2mm}

For more details that the inclusions $\Phi \hookrightarrow \Psi$ from the mentioned Oka principles are local weak homotopy equivalences 
see~\cite{Diss}, Chapter 4. 
\subsection{Sheaves of topological spaces} Equipping all sets of local sections from the sheaves 
corresponding to the Oka principles from the introduction with the compact open topology turns these 
into sheaves of topological spaces. This is used in all the quoted work. To see that there is no pathological behavior when dealing 
with parametric sheaves, see Lemma~\ref{correspondence}, a basic fact which is usually taken for granted in Oka theory. Recall that (complete) 
metric sheaves are defined as those sheaves of topological spaces whose sets of local sections are equipped with a 
(complete) metric which induces the topology. That the complex analytic sheaves from the mentioned Oka principles 
are complete metric depends on the following two facts.

\begin{fact}
\label{f1}
Let $X$ be a space which admits an exhaustion $K_1\subset K_2\subset K_3 \subset \cdots$ by compacts and $(Y,d)$ a complete metric space. 
Then
$$d_{C(X,Y)}(f,g)=\sum_{n\geq 1} \frac{1}{2^n} \frac{d_n(f,g)}{1+d_n(f,g)}, \ \ d_n(f,g)=\max_{x \in K_n} d(f(x),g(x))$$ for continuous maps $f,g\colon X\to Y$ 
defines a complete metric on the space of continuous maps $X \to Y$ which induces the compact open topology.
\end{fact}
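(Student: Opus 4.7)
The plan is to verify that (i) $d_{C(X,Y)}$ is a well-defined metric, (ii) its topology is the compact open topology, and (iii) it is complete. The key analytic input throughout is that the map $\varphi(t) = t/(1+t)$ is non-decreasing, bounded by $1$, and subadditive on $[0,\infty)$; boundedness together with the weights $2^{-n}$ gives uniform convergence of the defining series, and subadditivity is a quick algebraic check (one cross-multiplies and reduces to $ab(2+a+b) \geq 0$).

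First I would verify the metric axioms. Positivity, symmetry and boundedness are immediate. Non-degeneracy uses $\bigcup_n K_n = X$: vanishing of $d_{C(X,Y)}(f,g)$ forces $d_n(f,g) = 0$ for every $n$, hence $f = g$ pointwise. For the triangle inequality, monotonicity of $\varphi$ together with the triangle inequality for each $d_n$ gives $\varphi(d_n(f,g)) \leq \varphi(d_n(f,h)+d_n(h,g))$, and subadditivity bounds this by $\varphi(d_n(f,h))+\varphi(d_n(h,g))$; summing with weights $2^{-n}$ concludes.

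Next, since $Y$ is metric the compact open topology on $C(X,Y)$ coincides with the topology of uniform convergence on compacts. Using that every compact $K \subset X$ is contained in some $K_n$, each basic neighborhood $\{g : \sup_{x \in K} d(f(x),g(x)) < \epsilon\}$ contains a set of the form $\{g : d_n(f,g) < \epsilon\}$, and the latter is open in $d_{C(X,Y)}$ because controlling the $n$-th summand $2^{-n}\varphi(d_n(f,g))$ controls $d_n(f,g)$ via the inverse of $\varphi$. Conversely, given $\delta > 0$, pick $N$ with $\sum_{n>N} 2^{-n} < \delta/2$; then a finite intersection of sufficiently small $d_n$-balls for $n \leq N$ lies inside $B_{d_{C(X,Y)}}(f,\delta)$, hence inside a compact open neighborhood.

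Finally, completeness: a $d_{C(X,Y)}$-Cauchy sequence $(f_k)$ is uniformly Cauchy on each $K_n$ by the same comparison; completeness of $(Y,d)$ and the uniform limit theorem produce continuous $\tilde f_n \colon K_n \to Y$ with $f_k|K_n \to \tilde f_n$ uniformly. These agree on overlaps and patch into a continuous $f \colon X \to Y$; that $f_k \to f$ in $d_{C(X,Y)}$ follows by truncating the series tail (bounded uniformly by $2^{-N}$) and using uniform convergence on each $K_n$ for the head. The only subtle point I anticipate is the implicit hypothesis that every compact subset of $X$ is contained in some $K_n$; this holds in all the settings of the paper (where $X$ is a paracompact Hausdorff space with a compact exhaustion in the strong sense) but is worth stating explicitly to make the comparison of topologies go through.
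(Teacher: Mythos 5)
The paper does not prove Fact~\ref{f1}; it is dismissed as ``well known,'' so there is no argument to compare yours against. Your proof is the standard one and is correct: the subadditivity check $ab(2+a+b)\geq 0$ for $t\mapsto t/(1+t)$, the identification of the compact open topology with the topology of uniform convergence on compacts for metric targets, and the truncate-the-tail argument for both the topology comparison and completeness are exactly what a textbook proof does. Your closing caveat is well taken and is the only point of substance: both the claim that the metric topology is not coarser than the compact open topology and the continuity of the patched limit function require that every compact of $X$ (respectively, every point) sits inside some $K_n$ (respectively, inside some $K_n^\circ$), which holds when ``exhaustion'' is read in the usual strong sense $K_n\subset K_{n+1}^\circ$, as it is throughout the paper's applications, but not for an arbitrary nested sequence of compacts with union $X$.
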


\begin{fact}
\label{f2}
Let $X,Y$ be two locally connected locally compact second countable metric spaces. Then 
$d_H(f,g)=d_{C(X,Y)}(f,g)+d_{C(Y,X)}(f^{-1},g^{-1})$, where $d_{C(X,Y)}$ and $d_{C(Y,X)}$ are 
as in Fact~\ref{f1}, defines a complete metric on the set of homeomorphisms $X \to Y$ which induces 
the compact open topology.
\end{fact}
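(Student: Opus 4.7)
The statement has three things to check: that $d_H$ is a metric on $\Homeo(X,Y)$, that it induces the compact open topology, and that it is complete. The plan is to treat these in turn, pulling on Fact~\ref{f1} for the one-sided pieces and on classical general topology of homeomorphism groups for what is special about $\Homeo(X,Y)$.

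First I would observe that $d_H$ is manifestly a metric: it is the sum of two metrics (from Fact~\ref{f1}) composed with the well-defined bijection $f \mapsto f^{-1}$ on $\Homeo(X,Y)$. Next, to identify the topology, note that $d_H \geq d_{C(X,Y)}$ restricted to homeomorphisms, so $d_H$ a priori induces a finer topology than the compact open topology. For the converse one needs to know that if $f_n\to f$ in the compact open topology of $C(X,Y)$, with all $f_n, f$ homeomorphisms, then also $f_n^{-1}\to f^{-1}$ in the compact open topology of $C(Y,X)$. This is precisely the classical theorem of Arens which asserts that inversion is continuous on $\Homeo(X,Y)$ with the compact open topology when $X$ and $Y$ are locally compact and locally connected Hausdorff. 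I would quote this result; it is exactly the place where the local connectedness hypothesis is used.

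For completeness, given a $d_H$-Cauchy sequence $(f_n)$, both $(f_n)$ in $C(X,Y)$ and $(f_n^{-1})$ in $C(Y,X)$ are Cauchy in the complete metrics provided by Fact~\ref{f1}, so they converge uniformly on compacts to continuous maps $f\colon X\to Y$ and $g\colon Y\to X$. Since $X$ and $Y$ are locally compact Hausdorff, composition is jointly continuous in the compact open topology, so passing to the limit in $f_n\circ f_n^{-1} = \mathrm{id}_Y$ and $f_n^{-1}\circ f_n = \mathrm{id}_X$ yields $f\circ g = \mathrm{id}_Y$ and $g\circ f = \mathrm{id}_X$. Hence $f\in \Homeo(X,Y)$ with $f^{-1}=g$, and by construction $f_n\to f$ in $d_H$.

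The main obstacle is the second bullet, the Arens step: the identification of topologies rests on the non-obvious fact that on locally compact locally connected spaces the compact open topology on $\Homeo(X,Y)$ already makes inversion continuous. The other two parts are formal consequences of Fact~\ref{f1} together with the joint continuity of composition under local compactness.
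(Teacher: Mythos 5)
Your proposal is correct and follows essentially the same route as the paper: the identification of the topologies is reduced to the continuity of inversion, which is exactly where Arens' theorem and the local connectedness enter, and completeness is the routine limit argument the paper dismisses as easy. The only detail the paper adds is that Arens' theorem concerns the self-homeomorphism group of a single space, so continuity of $f\mapsto f^{-1}$ on $\Homeo(X,Y)$ is obtained by factoring it through pre- and post-composition with a fixed homeomorphism $g$, both of which are continuous in the compact open topology.
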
 

Fact~\ref{f1} is well known, and it is easy to show that $d_H$ from Fact~\ref{f2} turns the set of homeomorphisms into a complete metric space. 
It is not obvious (and for locally disconnected topological spaces generally false) that the topology induced by $d_H$ is 
not finer than the compact open topology. The proof depends on the following

\begin{thm}[Arens~\cite{Arens}]
The homeomorphism group of a locally compact locally connected Hausdorff space equipped with the compact open topology is a topological group. 
\end{thm}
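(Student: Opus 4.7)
The plan is to verify continuity of composition and inversion in the compact-open topology on $\Homeo(X)$, where subbasic open sets have the form $[K,V] \coloneqq \{h : h(K)\subset V\}$ with $K\subset X$ compact and $V\subset X$ open. Continuity of composition is the standard consequence of local compactness: given $f_0 \circ g_0 \in [K,V]$, the compact set $g_0(K)$ lies in the open set $f_0^{-1}(V)$, so local compactness (and Hausdorffness) yields a compact $L$ with $g_0(K) \subset L^\circ \subset L \subset f_0^{-1}(V)$, and then $[L,V] \times [K,L^\circ]$ is a product neighborhood of $(f_0,g_0)$ whose image under composition lies in $[K,V]$.

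The heart of the theorem is continuity of inversion at $f_0 \in \Homeo(X)$. Fix a subbasic neighborhood $[K,U]$ of $f_0^{-1}$, so $K \subset f_0(U)$; I seek a neighborhood $\Omega$ of $f_0$ with $g(U) \supset K$ for all $g \in \Omega$. For each $x \in K$ let $y = f_0^{-1}(x) \in U$; by local compactness and local connectedness pick a connected open $W_y \ni y$ with $\overline{W_y}$ compact and $\overline{W_y} \subset U$, then a connected open $U_x \ni x$ with $\overline{U_x}$ compact, connected, and contained in $f_0(W_y)$. Extract a finite subcover $U_{x_1}, \dots, U_{x_n}$ of $K$ and set $y_i = f_0^{-1}(x_i)$, $W_i = W_{y_i}$. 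Define
\[
\Omega \coloneqq \bigcap_{i=1}^n \bigl([\{y_i\}, U_{x_i}] \cap [\partial W_i,\, X \setminus \overline{U_{x_i}}]\bigr).
\]
This set is open (each $\partial W_i$ is closed in the compact $\overline{W_i}$, hence compact) and contains $f_0$, since $f_0(y_i) = x_i \in U_{x_i}$ and $\overline{U_{x_i}} \subset f_0(W_i)$ forces $\overline{U_{x_i}}$ into the interior of $f_0(\overline{W_i})$, hence to miss $f_0(\partial W_i)$. For $g \in \Omega$ the crucial identity is $g(\partial W_i) = \partial g(W_i)$, valid because $g$ is a self-homeomorphism. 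The connected compact set $\overline{U_{x_i}}$ avoids $\partial g(W_i)$, so it lies entirely in $g(W_i)$ or entirely in $X \setminus \overline{g(W_i)}$. Since $y_i \in W_i$ yields $g(y_i) \in g(W_i) \cap \overline{U_{x_i}}$, the first alternative holds, whence $\overline{U_{x_i}} \subset g(W_i) \subset g(U)$; ranging over $i$ gives $K \subset \bigcup_i U_{x_i} \subset g(U)$.

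The main obstacle is precisely this inversion step: constraining $g$ on compact subsets of $X$ via the compact-open topology gives no direct control over $g(U)$ for the possibly non-compact open set $U$. The resolution is a ``trapping'' construction in which one controls $g$ on the compact boundary $\partial W_i$ well enough to separate $\partial g(W_i)$ from a connected target $\overline{U_{x_i}}$; connectedness then prevents the target from escaping the image of $W_i$. This is exactly where local connectedness is indispensable -- matching the excerpt's warning that the statement fails in its absence -- while local compactness serves the double purpose of underwriting continuity of composition and of supplying the compact neighborhoods with which to build the necessary subbasic sets.
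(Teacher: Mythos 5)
Your proof is correct. Note, however, that the paper does not actually prove this statement: it is quoted as a theorem of Arens with a citation to his 1946 paper, and is then used as a black box to show that the metric $d_H$ of Fact~\ref{f2} induces the compact-open topology on spaces of homeomorphisms. So there is no in-paper argument to compare against; what you have written is essentially a reconstruction of Arens' original proof. The composition half is the standard local-compactness interpolation $g_0(K)\subset L^\circ\subset L\subset f_0^{-1}(V)$, and your inversion half is the classical trapping argument: control $g$ on the compact set $\partial W_i$ so that the connected compactum $\overline{U_{x_i}}$, which misses $\partial g(W_i)=g(\partial W_i)$, cannot leave $g(W_i)$ once it meets it at $g(y_i)$. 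All the small points check out --- $\partial W_i$ is compact, $f_0\in\Omega$ because $f_0(\partial W_i)=f_0(\overline{W_i})\setminus f_0(W_i)$ is disjoint from $f_0(W_i)\supset\overline{U_{x_i}}$, and the identity $g(\partial W_i)=\partial g(W_i)$ uses exactly that $g$ is a self-homeomorphism (this is where the argument would break for non-surjective embeddings). Your closing remark correctly isolates where each hypothesis enters.
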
 

To show that $d_H$ induces the compact open topology it suffices to show that $d_H(\cdot, g)$ is continuous with respect to the compact open topology for a fixed homeomorphism 
$g\colon X \to Y$; and this is the case if $I\colon \Homeo(Y,Z) \to \Homeo(Z,Y), \ I(f)=f^{-1}$ is continuous. The latter follows from the fact that $I$ is composition of the three continuous maps 
given by $f \mapsto g^{-1} \circ f$, $f \mapsto f^{-1}$ and $f \mapsto f \circ g^{-1}$, where the continuity of the second factor is due to Arens result. 

Fact~\ref{f1} and~\ref{f2} imply that the considered complex analytic sheaves from the mentioned Oka principles are complete metric in the following way:
It is known that the sets of local sections of the complex analytic sheaves from the mentioned Oka principles are closed subspaces either of a space 
of continuous maps $X \to Y$ for suitable $X$ and $Y$ or from the space of homeomorphisms $X \to Y$ for suitable $X$ and $Y$. 
In any case, Fact~\ref{f1} and~\ref{f2} imply that the sets of local sections are closed subsets of a complete metric space and 
hence complete if equipped with the suitable metric from Fact~\ref{f1} resp.~Fact~\ref{f2}. 

\subsection{Ordered flexibility}
Ordered flexibility is -- in the context of Oka theory -- most of times rather easy to show. Proofs shall be given elsewhere. 
Instead, we would like to discuss the examples addressed in Remark~\ref{remark ordered flex}. 
In most known Oka principles one looks at inclusions of sheaves $\Phi \hookrightarrow \Psi$, where $\Phi$ lives in 
the complex analytic category and $\Psi$ lives in the category of topological spaces. However, in recent advances (see e.g.~\cite{KLS, Leiterer}) 
one is forced to place $\Psi$ in the smooth category instead. In the smooth category -- opposed to the complex analytic and the topological 
category -- completeness is more delicate. One can turn e.g. the space of smooth functions $\C \to \C$ into a complete metric space by 
including all higher derivatives in the definition of the pseudometrics from Fact~\ref{f1}. However, the resulting topology is finer than the 
compact open topology. This is a disadvantage for proofs in Oka theory since Lemma~\ref{correspondence} does not apply anymore. A similar example 
emerges from~\cite{forum}, where one is forced to look at a sheaf $\Psi$ of continuous sections which are holomorphic in a neighborhood 
of some fixed subvariety. There seems to be no (natural) way to turn this sheaf into a complete metric sheaf without refining the 
compact open topology. In these examples one benefits from assumption (2) in Theorem~\ref{t1} as an alternative to 
assumption (1), since in (2) no completeness is asked.

\end{document}